\documentclass[a4paper,12pt]{article}

\usepackage[austrian,american]{babel}

\usepackage[intlimits,leqno]{amsmath}
\usepackage{amssymb} 
\usepackage{amsthm}
\usepackage{graphicx,color}
\usepackage{makeidx}
\usepackage{epsfig}
\usepackage{latexsym}
\usepackage{pstricks,pst-node,amsfonts}

\theoremstyle{plain}
\newtheorem{theo}{Theorem}
\newtheorem{prop}{Proposition}

\newtheorem{lemm}{Lemma}
\newtheorem{rema}{Remark}
\newtheorem{exam}{Example}

\newcommand\R{{\mathbb{R}}}
\newcommand\C{{\mathbb{C}}}
\newcommand{\F}{\mathcal{F}}
\newcommand{\J}{\mathcal{J}} 
\renewcommand{\i}{{\rm i}}
\newcommand{\x}{\mathbf{x}}
\renewcommand{\k}{\mathbf{k}}
\newcommand{\y}{\mathbf{y}}
\renewcommand{\d}{{\rm d}}
\newcommand{\supp}{\mbox{supp}}

\begin{document}

\title{On time reversal in photoacoustic tomography for tissue similar to water}

\author{ Richard Kowar\\
Department of Mathematics, University of Innsbruck, \\
Technikerstrasse 21a, A-6020,Innsbruck, Austria
}

\maketitle

\begin{abstract}
This paper is concerned with time reversal in \emph{photoacoustic tomography} (PAT) of dissipative media that are similar 
to water. Under an appropriate condition, it is shown that the time reversal method in~\cite{Wa11,AmBrGaWa11} based on the 
non-causal thermo-viscous wave equation can be used if the non-causal data is replaced by a \emph{time shifted} set of 
causal data. 
We investigate a similar imaging functional for time reversal and an operator equation with the time reversal image as right 
hand side. If required, an enhanced image can be obtained by solving this operator equation. 
Although time reversal (for noise-free data) does not lead to the exact initial pressure function, the theoretical and numerical 
results of this paper show that regularized time reversal in dissipative media similar to water is a valuable method. 
We note that the presented time reversal method can be considered as an alternative to the causal approach in~\cite{KaSc13} and 
a similar operator equation may hold for their approach.  
\end{abstract}

\section{Introduction}\label{sec-intro}

\emph{Photoacoustic tomography}~\cite{BurMatHalPal07,FinPatRak04,HaScBuPa,KucKun08,
ScGrLeGrHa09,XuWan05,XuWan05b,XuWanAmbKuc03} is a very promising medical imaging method that is currently improved by taking into 
account more complicated tissue properties . In this paper we focus on a time reversal method in dissipative 
media~\cite{AmBrGaWa11,AmBrGaWa13,BurGruHalNusPal07,HriKucNgu08,KaSc13,Ko10,KoSc12,RivZhaAna06,TrZhCo10,Wa11} and 
show that under an appropriate condition  
\begin{itemize}
\item the imaging functional in~\cite{AmBrGaWa11} based on the non-causal \emph{thermo-viscous wave equation} can be 
      used for time reversal if the non-causal data is replaced by a \emph{time shifted} set of causal pressure data and that 

\item a similar imaging functional can be used for time reversal, which can be improved by solving an operator equation 
      with the time reversal result as right hand side.
\end{itemize}        
We note that the \emph{time shift relation} between causal and non-causal pressure data holds only approximately, but  
in principle the non-causal data can be calculated from the causal one. However the use of the \emph{time shift relation}  
is less computational expensive and, in addition, it explains the successful use of the non-causal thermo-viscous wave equation. 
Consider a fixed experimental set-up for which many experiments are performed. It seems natural that the experimenter
performs - as a matter of routine - one and the same pressure data \emph{offsets} to all data. If this data offset 
corresponds to the mentioned time shift, then causality is approximately "restored" and causality violations due to 
the thermo-viscous wave equation may not be observed.

To outline the contents of this paper, we start with a basic description of the inverse problem of PAT in dissipative media.

\subsection{The PAT problem in dissipative media}
 
The goal of PAT is to estimate the function $\varphi:\R^3\to\R$ with 
$\supp(\varphi)\subseteq\Omega$ from pressure data $p$ measured at a boundary, say $\partial \Omega$, in which
$p$ and $\varphi$ are related by  (cf.~\cite{Ko10,KoSc12})
\begin{equation}\label{waveeqp}
\begin{aligned}
    \Delta p - \left(D_* + \frac{1}{c_0}\,\frac{\partial}{\partial t}\right)^2\,p 
        = - \frac{\varphi}{c_0^2}\delta'(t) \quad \mbox{ on }\quad \R^4 
        \quad\mbox{with} \quad
    p|_{t<0} = 0 \,,
\end{aligned}
\end{equation}
where $c_0$ denotes the speed of sound, $D_*$ is a time convolution operator with kernel $\F^{-1}\{\alpha_*\}$ and 
$\alpha_*$ is the complex attenuation law of the medium in which the wave propagates. We focus on the following complex 
attenuation laws  
\begin{equation}\label{Modeltvksb}
\begin{aligned} 
   \alpha_*^{ksb} (\omega) 
      := \frac{(-\i\,\omega)}{c_0\,\sqrt{1+(-\i\,\tau_1\,\omega)}} 
         + \alpha_2\,(-\i\,\omega) \qquad\quad (\alpha_2\geq 0)\,
\end{aligned}
\end{equation}
and 
\begin{equation}\label{Modeltv}
\begin{aligned} 
   \alpha_*^{tv} (\omega) 
      := \frac{(-\i\,\omega)}{c_0\,\sqrt{1+(-\i\,\tau_1\omega)}} 
          - \frac{(-\i\,\omega)}{c_0}  \qquad\quad (c_0,\tau_1>0)\,.
\end{aligned}
\end{equation}
Here $\tau_1$ denotes the relaxation time. Only the first one obeys causality, i.e. the respective pressure function $p^{ksb}$ 
has a finite wave front speed (cf.~\cite{Ko10b,KoSc12})
$$
        c_F = \frac{c_0}{1+\alpha_2\,c_0}  \leq c_0\,.
$$
As mentioned above, it will be sown that 
$$
     p^{tv}(\x,t) \approx p^{ksb}\left( \x, t + \frac{|\x|}{c_0} + \alpha_2\,|\x| \right)   
              \qquad \mbox{for}\qquad 
     \x\in\partial \Omega,\,t\in\R\,
$$
holds under an appropriate condition. Hence all results based on the non-causal thermo-viscous wave equation can be applied to 
time shifted causal data. For this setting we introduce an imaging functional for estimating the initial pressure function 
$\varphi$.

\subsection{The imaging functional}

Let $p^{tv}$ be the solution of~(\ref{waveeqp}) with~(\ref{Modeltv}) 
and $\varphi\in L^2(\Omega)$, 
$$
    \phi_T := p^{tv}|_{t=T}\,,\qquad 
    \beta := p^{tv}|_{\partial \Omega} \qquad\mbox{and}\qquad 
    \Phi_T:(\phi_T|_{\Omega},\beta)\mapsto p^{tv}|_{t=T}\,.
$$
Moreover, let $q^{tv}$ denote the solution of the \emph{regularized time reversed} thermo-viscous wave equation 
\begin{equation}\label{tvwaveeqrev}
     \left(\mbox{Id} - \tau_1\frac{\partial}{\partial t}\right)\,\Delta\,q 
     - \frac{1}{c_0^2}\,\frac{\partial^2 q}{\partial t^2} 
     = \frac{\mathcal{R}_D\,(\phi_T)}{c_0^2}\,\left(\mbox{Id} - \tau_2\,\frac{\partial }{\partial t}\right)\,
            \delta'(t) \,,
\end{equation} 
where $\mathcal{R}_D$ is a regularization operator\footnote{A good practical choice is a convolution with a Gaussian function 
with expected value $0$ and variance $2\,D$ with the necessary side condition $D>\tau_1\,c_0^2\,T$.} that guarantees the 
existence of the time reversal for $\varphi\in L^2(\Omega)$. With these notations, we introduce the functionals 
\begin{equation}\label{defimagf}
     F[\mathcal{R}_D\,\phi_T] := q^{tv}|_{t=T} 
             \qquad\mbox{and}\qquad 
     F_1 [\phi_T|_{\Omega},\beta] := 2\,F[\mathcal{R}_D\,\Phi_T(\phi_T|_{\Omega},\beta)]\,.
\end{equation} 
In this paper we show under the assumption
\begin{equation}\label{assT}
       \supp(\varphi) \subseteq B_{2\,c_0\,T}(\x)  \qquad\mbox{for all}\qquad \x\in\Omega
\end{equation}
that the \emph{imaging functional} $F_1$ satisfies for $D>0$
\begin{equation*}
   \lim_{\tau_1\to 0} F_1[\phi_T|_{\Omega},\beta] 
           = \mathcal{R}_D\varphi 
           = F_1[\phi_T|_{\Omega},\beta]|_{\tau_1=0}    \qquad \mbox{on}\qquad \Omega\,, 
\end{equation*}
in particular 
\begin{equation}\label{approxF}
     F_1[\phi_T|_{\Omega},\beta] \sim  \mathcal{R}_D \,\varphi  \,  
     \qquad\quad \mbox{for sufficiently small $\tau_1>0$} \,, 
\end{equation}
and that $\mathcal{R}_D \,\varphi$ satisfies the operator equation  
\begin{equation}\label{fredinteq}
\begin{aligned}
   \left(\mbox{Id} + \tau_1^2\,c_0^2\,\Delta\right)^2\,\mathcal{R}_D\,\varphi 
      + c_0^2\,\Delta\,\J_T\,\mathcal{R}_D\,\varphi 
            = \frac{1}{2}\,F_1[\phi_T|_{\Omega},\beta]\qquad \mbox{on $\in\Omega$}\,,
\end{aligned}
\end{equation}
where $\J_t\,\xi$ is defined as the solution of 
\begin{equation}\label{waveequ}
     \Delta\,w 
        + \frac{\tau_1^2\,c_0^2}{4}\,\Delta^4\,w 
        - \frac{1}{c_1^2}\,\frac{\partial^2 w}{\partial t^2} 
          = - \frac{2\,\xi}{c_1^2}  \qquad\mbox{on $\R^3\times [0,T]$} \qquad (c_1:=2\,c_0)\,.
\end{equation} 
Subsequently, the properties of operator equation~(\ref{fredinteq}) and wave equation~(\ref{waveequ}) are investigated.  

We interpret the appearance of the regularization $\mathcal{R}_D$ with restriction $D>\tau_1\,c_0^2\,T$ as a resolution limit 
for the noise-free case. Stronger attenuation or a larger domain (larger $T$) only permit the estimation of a stronger smoothed 
version of the initial pressure function $\varphi$. For tissue similar to water we have $c_0\approx 1500\,\frac{m}{s}$, 
$\tau_1\approx 10^{-9}\,s$ (cf.~\cite{KiFrCoSa00}) and if the diameter of $\Omega$ is $d=0.5\,m$, i.e. $T=\frac{0.5\,m}{c_0}$, 
then we get 
$$
         D > 7.5\cdot10^{-7}\,m\,.  
$$
If $T$ is four times as large, then $D > 3\cdot10^{-6}\,m$, which is from the practical point of view not a limitation for PAT.

In~\cite{AmBrGaWa11} and~\cite{KaSc13} similar imaging functionals were considered for non-causal and causal data, 
respectively, and a result analogous to~(\ref{approxF}) was shown via approximation in the frequency-space domain. 
One goal of this paper is to get an expression for a reasonable imaging functional without cutting off frequencies or 
wave vectors, because the cut off values are not known in general. In addition, we want to show how an approach based on 
non-causal thermo-viscous data can be used if real causal data are available. Moreover, it seems less computational expensive 
to solve a time reversed partial differential equation than an time reversed integro-differential equation. However, 
in the long run, it seems indispensable to base some of the high quality imaging methods on causal integro-differential 
equations (cf.~\cite{KaSc13}). 
We consider the work presented in this paper as a trade off between the approaches~\cite{AmBrGaWa11} and~\cite{KaSc13}. \\

This paper is organized as follows: In Section~\ref{sec-dp} the properties of dissipative waves propagating forward and 
backward in time are investigated. Afterwards, it is shown that the operator $\J_T$ is well-defined and compact. 
The operator equation~(\ref{fredinteq}) is derived in Section~\ref{sec-dopeq} and its properties are investigated in 
the subsequent section. In Section~\ref{sec-propim} it is shown that the imaging functional converges to the 
initial pressure function for $\tau_1\to 0$. Numerical simulations of the imaging functional are presented in 
Section~\ref{sec-sim} and the paper is concluded with the section Conclusions.

\section{Properties of waves in thermo-viscous media}\label{sec-dp}

This section investigate waves in thermo-viscous media that propagates forward and backward in time.

\subsection{Waves forward in time}

Until Lemma~\ref{lemm:tva} of this section, $\hat p(\cdot,\omega)$ and $\F\{p\}(\cdot,\omega)$ denote the Fourier transform 
of $p(\cdot,t)$ with respect to $t$. \\

Dissipative pressure waves for PAT can be modeled as the solution of the integro-differential 
equation~(\ref{waveeqp}), where $\alpha_*:\R\to\C$ is such that 
\begin{itemize}
\item [(A1)]  $\Re(\alpha_*)$ is even, non-negative and increasing on $(0,\infty)$,   

\item [(A2)]  $\Im(\alpha_*)$  is odd, 

\item [(A3)]  a spherical wave has a finite wave front speed smaller or equal to $c_0$, i.e. (cf.~\cite{Ko10,KoSc12})
$$
          \F^{-1}\{e^{-\alpha_*(\cdot)}\}(t) = 0 \qquad\mbox{for}\qquad 
          t\leq 0\,.
$$
\end{itemize} 
The real part of $\alpha_*:\R\to \C$ is called the \emph{attenuation law} and $\alpha_*$ is called the 
\emph{complex attenuation law}. 
The first two conditions guarantee that the Green function is real valued and the second is a causality condition on 
the Green function.

First we focus on the Green function of wave equation~(\ref{waveeqp}) for general complex attenuation laws and for the two 
special cases~(\ref{Modeltvksb}) and~(\ref{Modeltv}).

\begin{theo}\label{theo:direct1}
Let $T>0$, $\alpha_*$ satisfy properties (A1)-(A3) and $\varphi \in L^2(\R^3)$. Then the direct problem~(\ref{waveeqp})  
has a unique solution and the Green function is given by 
\begin{equation}\label{defGreen}
   \hat G (\x,\omega) 
     =  \frac{1}{\sqrt{2\,\pi}}\,\frac{e^{\i\,k(\omega)\,|\x|}}{4\,\pi\,|\x|} 
     \qquad\mbox{with}\qquad 
     k(\omega) = \i\,\alpha_*(\omega) + \frac{\omega}{c}\,.
\end{equation}
Moreover, causality condition (A3) is satisfied for $\alpha_*^{ksb}$ defined as in~(\ref{Modeltvksb}), but not for 
$\alpha_*^{tv}$ defined as in~(\ref{Modeltv}). 
\end{theo}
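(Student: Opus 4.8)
The plan is to work in the temporal Fourier domain and reduce the PDE~(\ref{waveeqp}) to a Helmholtz-type equation for each frequency, then identify the Green function explicitly and finally check causality via the analyticity properties of $k(\omega)$.

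First I would take the Fourier transform of~(\ref{waveeqp}) in $t$. The operator $D_*$ becomes multiplication by $\alpha_*(\omega)$ and $\partial_t$ becomes $-\i\,\omega$, so the differential operator $\left(D_* + \tfrac1{c_0}\partial_t\right)^2$ turns into $\left(\alpha_*(\omega) - \tfrac{\i\,\omega}{c_0}\right)^2 = -\left(\i\,\alpha_*(\omega) + \tfrac{\omega}{c_0}\right)^2 = -k(\omega)^2$, with $k(\omega)$ as in~(\ref{defGreen}) (writing $c = c_0$). Hence $\hat p$ satisfies $\Delta \hat p + k(\omega)^2\,\hat p = -\tfrac{\varphi}{c_0^2}\,\widehat{\delta'}(t) = \tfrac{\i\,\omega}{c_0^2\sqrt{2\pi}}\,\varphi$ (up to the normalization convention for $\F$). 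The fundamental solution of $\Delta + k^2$ in $\R^3$ that corresponds to outgoing waves is $\tfrac{e^{\i\,k|\x|}}{4\pi|\x|}$, which gives the stated $\hat G$ after dividing out the source amplitude and keeping only the geometric part; the solution of the direct problem is then $\hat p(\cdot,\omega) = (\hat G(\cdot,\omega) * \varphi)\cdot(\text{source factor})$, and uniqueness follows from the Sommerfeld-type radiation condition (equivalently, the requirement $p|_{t<0}=0$ picks out this branch).

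Next I would address causality. By the convolution structure, $G(\x,t) = \tfrac{1}{4\pi|\x|}\,\F^{-1}\{e^{\i\,k(\omega)|\x|}\}(t)$, and since $\i\,k(\omega)|\x| = -\alpha_*(\omega)|\x| + \tfrac{\i\,\omega|\x|}{c_0}$, the factor $e^{\i\,\omega|\x|/c_0}$ is a pure time shift by $|\x|/c_0$, so the wave front at $\x$ is governed by whether $\F^{-1}\{e^{-\alpha_*(\omega)|\x|}\}(t)$ vanishes for $t\le 0$ — which for $|\x|$ fixed reduces, up to rescaling, exactly to assumption (A3). For $\alpha_*^{ksb}$ I would invoke the cited result (or verify directly) that $\i\,\alpha_*^{ksb}(\omega)+\omega/c_0$ extends to a bounded analytic function in the lower half-plane $\Im(\omega)<0$ with the right growth, so that closing the Fourier inversion contour in the appropriate half-plane yields $G(\x,t)=0$ for $t < |\x|\,c_F^{-1}\le |\x|/c_0$ with $c_F = c_0/(1+\alpha_2 c_0)$; the branch of the square root is chosen so that $\Re\sqrt{1+(-\i\tau_1\omega)}>0$, keeping $\Re(\alpha_*^{ksb})\ge 0$. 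For $\alpha_*^{tv}$ the term $-(-\i\omega)/c_0$ cancels precisely the $\omega/c_0$ contribution in $k(\omega)$, leaving $k^{tv}(\omega) = \i\,\alpha_*^{tv}(\omega) + \omega/c_0 = \tfrac{\i\,(-\i\omega)}{c_0\sqrt{1+(-\i\tau_1\omega)}} = \tfrac{\omega}{c_0\sqrt{1+(-\i\tau_1\omega)}}$, whose large-$|\omega|$ behaviour is $\sim \sqrt{\omega}$ rather than linear; I would show that this sublinear phase growth is incompatible with a finite wave front, e.g. by exhibiting that $\F^{-1}\{e^{\i k^{tv}(\omega)|\x|}\}$ has support extending to $t\le 0$ (the standard thermo-viscous "infinite signal speed" phenomenon), contradicting (A3).

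The main obstacle is the causality half of the statement: verifying rigorously that $\alpha_*^{ksb}$ yields $\F^{-1}\{e^{-\alpha_*^{ksb}(\cdot)}\}(t)=0$ for $t\le 0$ requires controlling the analytic continuation of the branch $\sqrt{1+(-\i\tau_1\omega)}$ into the lower half-plane (choosing the branch cut along the ray where $1+(-\i\tau_1\omega)\le 0$, which lies in the upper half-plane, so the lower half-plane is cut-free) together with a Paley--Wiener / contour-shift argument, and symmetrically showing the contour argument \emph{fails} for $\alpha_*^{tv}$; the Helmholtz reduction and the form of $\hat G$ are routine by comparison. I would lean on~\cite{Ko10,Ko10b,KoSc12} for the detailed branch and decay estimates and present only the contour-deformation skeleton here.
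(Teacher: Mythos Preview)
Your sketch is correct and in fact goes well beyond what the paper does: the paper's own ``proof'' consists of a single sentence referring the reader to~\cite{Ko10} and~\cite{KoSc12}, with no argument given in the text. Your reduction via temporal Fourier transform to the Helmholtz equation $\Delta\hat p + k(\omega)^2\hat p = \text{(source)}$, identification of the outgoing fundamental solution $e^{\i k(\omega)|\x|}/(4\pi|\x|)$, and the causality discussion through the analytic continuation of $k(\omega)$ into the lower half-plane (Paley--Wiener / contour deformation) is precisely the standard route taken in those cited works. Your computation $k^{tv}(\omega)=\omega/\bigl(c_0\sqrt{1+(-\i\tau_1\omega)}\bigr)\sim\sqrt{\omega}$ and the observation that sublinear phase growth precludes a finite wave front is exactly the mechanism behind the non-causality of the thermo-viscous model; likewise, the branch-cut placement for $\sqrt{1+(-\i\tau_1\omega)}$ in the upper half-plane is the right technical point for $\alpha_*^{ksb}$. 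There is nothing to correct---only to note that, since the paper defers entirely to the references, your write-up is already more self-contained than the paper's treatment.
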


\begin{proof}
The proof of these facts can be found in~\cite{Ko10} and~\cite{KoSc12}. 
\end{proof}

Because the identity 
$$
    \left(1+(-\i\,\tau_1\,\omega)\right)\,
    \left( \alpha_*^{tv} + \frac{(-\i\,\omega)}{c_0}\right)^2 
         = \frac{(-\i\,\omega)^2}{c_0^2}  \qquad\quad \omega\in\R
$$
in the frequency domain is equivalent to the operator identity
$$
  \left(\mbox{Id} + \tau_1\,\frac{\partial}{\partial t} \right)\, 
     \left(D_*^{tv} + \frac{1}{c_0}\,\frac{\partial}{\partial t}\right)^2 
        = \frac{1}{c_0^2}\,\frac{\partial^2}{\partial t^2}\,,
$$
we see that wave equation~(\ref{waveeqp}) with complex attenuation law~(\ref{Modeltv}) is equivalent to the 
\emph{thermo-viscous wave equation}: 
\begin{equation}\label{tvwaveeq}
\begin{aligned}
     \left(\mbox{Id} + \tau_1\frac{\partial}{\partial t}\right)\,\Delta\,p 
     - \frac{1}{c_0^2}\,\frac{\partial^2 p}{\partial t^2} 
     = - \frac{\varphi}{c_0^2}\,\left(\mbox{Id} + \tau_2\,\frac{\partial }{\partial t}\right)\,
            \delta(t) \,
\end{aligned}
\end{equation} 
with $\tau_2=\tau_1$. Because the operator $\left(\mbox{Id} + \tau_2\,\frac{\partial }{\partial t}\right)$ 
is usually neglected in the literature, we distinguish between $\tau_1$ and $\tau_2$ in the following.   
This allows us to consider the approximate case $\tau_2=0$, too.  

The following Lemma shows that the Green functions of~(\ref{waveeqp})  with $\alpha_*$ defined 
by~(\ref{Modeltvksb}) and~(\ref{Modeltv}), respectively, are related by a space dependent time shift. 

\begin{lemm}\label{lemm:shift}
Let $G^{ksb}$ and $G^{tv}$ denote the Green functions of~(\ref{waveeqp})  with $\alpha_*$ defined 
by~(\ref{Modeltvksb}) and~(\ref{Modeltv}), respectively and 
\begin{equation}\label{defT1}
\begin{aligned} 
     T_1(\x) := \frac{|\x|}{c_1}   \qquad\mbox{with}\qquad  
     \frac{1}{c_1} := \frac{1}{c_0} + \alpha_2\,.
\end{aligned}
\end{equation} 
Then 
\begin{equation}\label{relGthGthc}
\begin{aligned} 
     G^{tv}(\x,t - T_1(\x)) = G^{ksb}\left( \x,t\right)   
              \qquad \mbox{for}\qquad 
     \x\in\R^3,\,t\in\R\,.
\end{aligned}
\end{equation}
\end{lemm}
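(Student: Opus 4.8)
The plan is to reduce the claim to the explicit frequency-domain representation of the Green function supplied by Theorem~\ref{theo:direct1} and to recognize~(\ref{relGthGthc}) as the Fourier-side expression of a pure time shift. First I would note that, although $\alpha_*^{tv}$ violates~(A3), the frequency-domain formula~(\ref{defGreen}) still describes $\hat G^{tv}$ (only its time-domain counterpart fails to be causal), so both $\hat G^{ksb}$ and $\hat G^{tv}$ are given by~(\ref{defGreen}) with the respective wave numbers $k(\omega)=\i\,\alpha_*(\omega)+\omega/c_0$. A direct computation from~(\ref{Modeltvksb}) and~(\ref{Modeltv}) yields
$$
   k^{ksb}(\omega) = \frac{\omega}{c_0\,\sqrt{1+(-\i\,\tau_1\,\omega)}} + \alpha_2\,\omega + \frac{\omega}{c_0}
   \qquad\mbox{and}\qquad
   k^{tv}(\omega) = \frac{\omega}{c_0\,\sqrt{1+(-\i\,\tau_1\,\omega)}}\,,
$$
because in $k^{tv}$ the summand $-(-\i\,\omega)/c_0$ of $\alpha_*^{tv}$ cancels the $\omega/c_0$ produced by $\i\,\alpha_*\mapsto k$, whereas in $k^{ksb}$ the summand $\alpha_2\,(-\i\,\omega)$ contributes $\alpha_2\,\omega$ and the $\omega/c_0$ survives. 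Hence, using $1/c_1=1/c_0+\alpha_2$,
$$
   k^{ksb}(\omega) = k^{tv}(\omega) + \frac{\omega}{c_1}\,.
$$

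Next I would insert this into~(\ref{defGreen}). Since the prefactor $\frac{1}{\sqrt{2\,\pi}}\,\frac{1}{4\,\pi\,|\x|}$ is common to the two Green functions, the wave-number identity becomes
$$
   \hat G^{ksb}(\x,\omega) = e^{\i\,\omega\,|\x|/c_1}\,\hat G^{tv}(\x,\omega) = e^{\i\,\omega\,T_1(\x)}\,\hat G^{tv}(\x,\omega)
$$
with $T_1(\x)=|\x|/c_1$ as in~(\ref{defT1}). Applying the inverse Fourier transform in $\omega$ together with the elementary shift rule $\F^{-1}\{e^{\i\,\omega\,a}\,\hat f\}(t)=f(t-a)$ — the sign being fixed by the normalization of $\F$ introduced just before Theorem~\ref{theo:direct1} — gives $G^{ksb}(\x,t)=G^{tv}(\x,t-T_1(\x))$, i.e.~(\ref{relGthGthc}). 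I would remark in passing that $T_1(\x)\geq0$ since $1/c_1=1/c_0+\alpha_2>0$, so the shift is a genuine delay, consistent with the finite wave front speed $c_1$ of the causal model.

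I do not anticipate a real obstacle: the lemma is essentially the bookkeeping identity $\alpha_*^{ksb}-\alpha_*^{tv}=(-\i\,\omega)(1/c_0+\alpha_2)=(-\i\,\omega)/c_1$ transported to the level of Green functions. The only point deserving a word of care is rigor: for fixed $\x\neq0$ the functions $G^{ksb}(\x,\cdot)$ and $G^{tv}(\x,\cdot)$ are a priori tempered distributions in $t$, so that multiplication by the smooth bounded multiplier $e^{\i\,\omega\,T_1(\x)}$ and the ensuing inverse transform must be read distributionally; this is harmless because $e^{\i\,\omega\,T_1(\x)}$ is a translation-type Fourier multiplier and both Green functions are well defined by Theorem~\ref{theo:direct1}. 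If one prefers to avoid distributions, the same computation can be run after convolving with $\varphi\in L^2(\R^3)$ and a mollifier, which changes nothing but the notation.
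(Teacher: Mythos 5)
Your proof is correct and coincides in essence with the paper's own argument: both identify $\alpha_*^{ksb}(\omega)=\alpha_*^{tv}(\omega)+\bigl(\tfrac{1}{c_0}+\alpha_2\bigr)(-\i\,\omega)$, insert this into~(\ref{defGreen}) to get $\hat G^{ksb}(\x,\omega)=e^{\i\,\omega\,T_1(\x)}\,\hat G^{tv}(\x,\omega)$, and read off the time shift (the paper phrases this last step as a time convolution with $\delta(t-T_1(\x))$, which is the same as your shift rule). Your remarks on the non-causal use of~(\ref{defGreen}) for $\alpha_*^{tv}$ and on the distributional reading are harmless additions consistent with the paper's implicit usage.
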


\begin{proof}
According to~(\ref{Modeltvksb}) and~(\ref{Modeltv}), we have
$$   
    \alpha_*^{ksb} (\omega) 
        =  \alpha_*^{tv} (\omega) 
            + \left( \frac{1}{c_0} + \alpha_2 \right)\,(-\i\,\omega)\,
$$
and thus inserting these laws in~(\ref{defGreen}) yields   
$$
    \hat G^{ksb}(\x,\omega) 
        = \sqrt{2\,\pi}\,\hat G^{tv}(\x,\omega) \, 
        \frac{e^{\i\,\omega\,\left(\frac{|\x|}{c_0} + \alpha_2\,|\x|\right)}}{\sqrt{2\,\pi}}\,.
$$
The claim follows by employing the convolution theorem 
$$
     \F\{f\,g\} = \sqrt{2\,\pi}\,\hat f\,\hat g
                    \qquad\mbox{and}\qquad
     \F^{-1}\left\{\frac{e^{\i\,\omega\,\frac{|\x|}{c_1}}}{\sqrt{2\,\pi}}\right\}(t) 
          = \delta\left(t-\frac{|\x|}{c_1}\right)\,.
$$
\end{proof}

We note that $G^{tv}$ satisfies a partial differential equation, but $G^{ksp}$ does not. As a consequence, 
$p^{ksb}:=G^{ksp} *_{\x,t} f$ satisfies an integro-differential equation with forcing term $f$.

Now we are ready to prove that if the size of $\supp(\varphi)$ is much smaller than its distance 
to the detectors on $\partial \Omega$, then the non-causal and causal pressures $p^{tv}$ and $p^{ksb}$ are related by 
\begin{equation}\label{relp}
\begin{aligned} 
     p^{tv}(\x,t) \approx p^{ksb}\left( \x, t + T_1(\x) \right)   
              \qquad \mbox{for}\qquad 
     \x\in\partial \Omega,\,t\in\R\,
\end{aligned}
\end{equation}
with $T_1$ defined as in~(\ref{defT1}). This enables us to avoid the integro-differential equation for $p^{ksb}$. 

\begin{prop}\label{prop:shift2}
Let $p^{ksb}$ and $p^{tv}$ denote the solutions of~(\ref{waveeqp})  with $\alpha_*$ defined 
by~(\ref{Modeltvksb}) and~(\ref{Modeltv}), respectively.  
If 
\begin{equation}\label{mainass}
\begin{aligned} 
       T_1(\x) \approx T_1(\x-\y) \qquad\mbox{for all}\qquad \x\in\partial\Omega,\,\y\in \supp(\varphi)\,,
\end{aligned}
\end{equation}
then ~(\ref{relp}) holds. 
For example, condition~(\ref{mainass}) holds, if 
$$
      T_1(\y) <<  T_1(\x)  \qquad\mbox{for all}\qquad \x\in\partial\Omega,\,\y\in \supp(\varphi)\,. 
$$
\end{prop}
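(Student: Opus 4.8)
The plan is to transport the pointwise Green–function identity of Lemma~\ref{lemm:shift} through the convolution representation of the solution of~(\ref{waveeqp}). First I would invoke Theorem~\ref{theo:direct1}: the solution of~(\ref{waveeqp}) is the space–time convolution of the Green function with the source $-\,c_0^{-2}\,\varphi\,\delta'(t)$, so that
\begin{equation*}
   p(\x,t) = -\frac{1}{c_0^2}\,\frac{\partial}{\partial t}\int_{\R^3} G(\x-\y,t)\,\varphi(\y)\,\d\y\,,
\end{equation*}
where the $t$-derivative is produced by the $\delta'(t)$ in the forcing term. This representation holds verbatim for $p^{ksb}$ with $G=G^{ksb}$ and for $p^{tv}$ with $G=G^{tv}$. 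Since $\x\in\partial\Omega$ and $\y\in\supp(\varphi)\subseteq\Omega$, the argument $\x-\y$ stays away from the origin, so the $1/|\x-\y|$ singularity of the Green functions plays no role in what follows.

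Next I would substitute $G^{ksb}(\x-\y,t)=G^{tv}\!\left(\x-\y,\,t-T_1(\x-\y)\right)$ from~(\ref{relGthGthc}) into the representation of $p^{ksb}$ (noting that a constant shift in $t$ commutes with $\partial_t$) and then evaluate at the shifted time $t+T_1(\x)$:
\begin{equation*}
   p^{ksb}\!\left(\x,t+T_1(\x)\right)
     = -\frac{1}{c_0^2}\int_{\R^3}\varphi(\y)\,\Big(\tfrac{\partial}{\partial t}G^{tv}\Big)\!\left(\x-\y,\,t+T_1(\x)-T_1(\x-\y)\right)\,\d\y\,.
\end{equation*}
Comparing with the analogous representation of $p^{tv}(\x,t)$, the two integrands differ only through the offset $T_1(\x)-T_1(\x-\y)$ in the time slot. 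Hypothesis~(\ref{mainass}) states precisely that this offset is $\approx 0$ uniformly for $\y\in\supp(\varphi)$; freezing $T_1(\x-\y)$ at the value $T_1(\x)$ inside the integral then yields~(\ref{relp}). The quoted sufficient condition follows from the triangle inequality: $|T_1(\x)-T_1(\x-\y)| = c_1^{-1}\big|\,|\x|-|\x-\y|\,\big|\le c_1^{-1}|\y| = T_1(\y)$, so $T_1(\y)\ll T_1(\x)$ on $\supp(\varphi)$ forces~(\ref{mainass}).

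The main obstacle is making the concluding ``$\approx$'' quantitative, because $G^{tv}(\x-\y,\cdot)$ is only a distribution (a sharp, unregularized wave front carrying a $\delta$-type singularity, with the extra $\partial_t$ from the source making it even more singular), so a genuinely pointwise-in-$t$ error bound is not available as stated. I would handle this either by passing to the Fourier domain in $t$, where the time shift is the exact multiplier $e^{\i\,\omega\,T_1(\x-\y)}$ and~(\ref{mainass}) replaces it by $e^{\i\,\omega\,T_1(\x)}$ with a phase error $|\omega|\,\sup_{\y\in\supp(\varphi)}|T_1(\x)-T_1(\x-\y)|$ that is small on the physically relevant band, or by phrasing the comparison as an $L^2_t$/weak estimate governed by the modulus of continuity of $t\mapsto\partial_t G^{tv}(\x-\y,t)$ against that same variation of the relaxation delay over the source region. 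Along the way I would also justify the convolution representation and the differentiation under the integral sign (both consequences of Theorem~\ref{theo:direct1} and the $L^2$-mapping properties of the forward map), and remark that the approximation necessarily degrades as $\supp(\varphi)$ approaches $\partial\Omega$, consistent with the ``small source, far from the detectors'' regime in which~(\ref{mainass}) is meant to be read.
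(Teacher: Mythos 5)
Your argument is essentially the paper's own proof: you use the convolution representation of the solutions, insert the Green-function time-shift identity of Lemma~\ref{lemm:shift}, and freeze the offset $T_1(\x)-T_1(\x-\y)$ via~(\ref{mainass}), while the sufficient condition follows from the (reverse) triangle inequality exactly as in the paper. The extra remarks on quantifying the ``$\approx$'' (Fourier phase error, distributional wave front) go beyond what the paper does but do not change the route; the only nitpick is a sign-convention slip in your Green-function representation, which cancels in the comparison.
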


\begin{proof}
Let condition~(\ref{mainass}) hold and $d(\x,t):=\delta\left(t+T_1(\x)\right)$. Then we infer from Lemma~\ref{lemm:shift} 
that 
\begin{equation*}
\begin{aligned}
   c_0^2\,p^{ksb} *_t d  
      &= \int \frac{\partial G^{ksb}}{\partial t}(\cdot-\y,\cdot)\,\varphi(\y)\,\d \y  *_t d 
      = \int \frac{\partial G^{ksb}}{\partial t}\left(\cdot-\y,\cdot + T_1(\cdot)\right)\,\varphi(\y)\,\d \y \\
       &\;\approx\; \int \frac{\partial G^{tv}}{\partial t}\left(\cdot-\y,\cdot + T_1\left(\cdot - \y\right)\right)\,\varphi(\y)\,\d \y 
      = G^{tv} *_\x \varphi 
      = c_0^2\,p^{tv}  \,,
\end{aligned}
\end{equation*}
which proves the first claim. Now for the second one. Let $\x\in\partial\Omega$ and $\y\in \supp(\varphi)$. 
From the assumption, 
$$
     T_1(\x) := \frac{|\x|}{c_1}   \qquad\mbox{with}\qquad  
     \frac{1}{c_1} := \frac{1}{c_0} + \alpha_2\,
$$
and the two \emph{triangle inequalities}, it follows that
$$
    T_1(\x-\y) \leq T_1(\x) + T_1(\y) \approx T_1(\x)
$$
and 
$$
   T_1(\x - \y) \geq  |T_1(\x) - T_1(\y) | \approx T_1(\x) \,, 
$$
i.e. $T_1(\x-\y) \approx T_1(\x)$.
\end{proof}

\begin{rema}\label{rema:nsw}
Under appropriate conditions Proposition~\ref{prop:shift2} with 
$$
c_1 := c_0\,\left(\sqrt{\frac{\tau_1}{\tilde\tau_1}} - 1\right)^{-1}
$$  
holds for the causal wave equation of Nachman, Smith and Waag in~\cite{NaSmWa90}, 
if $\tilde\tau_1\approx \frac{1}{2}(3-\sqrt{5})\,\tau_1$.  
This follows from the fact that the complex attenuation law $\alpha_*^{nsw}(\omega)$ (cf.~\cite{KoSc12}) of the Nachman, 
Smith and Waag model is approximately the same as $\alpha_*^{tv}(\omega)+ \frac{(-\i\,\omega)}{c_1}$ for small frequencies. 
As a demonstration of the quality of this relation, we have plotted the real and imaginary parts of both laws in Fig.~\ref{fig:comp} 
for the parameter values $\tau_1= 10^{-9}\,s$, $\tilde\tau_1:=\frac{1}{2}(3-\sqrt{5})\,\tau_1$ and 
$c_0=\frac{3}{2}\cdot 10^3\,\frac{m}{s}$ (cf.~\cite{KiFrCoSa00,Web00}). 
\begin{figure}[!ht]
\begin{center}
\includegraphics[height=5.1cm,angle=0]{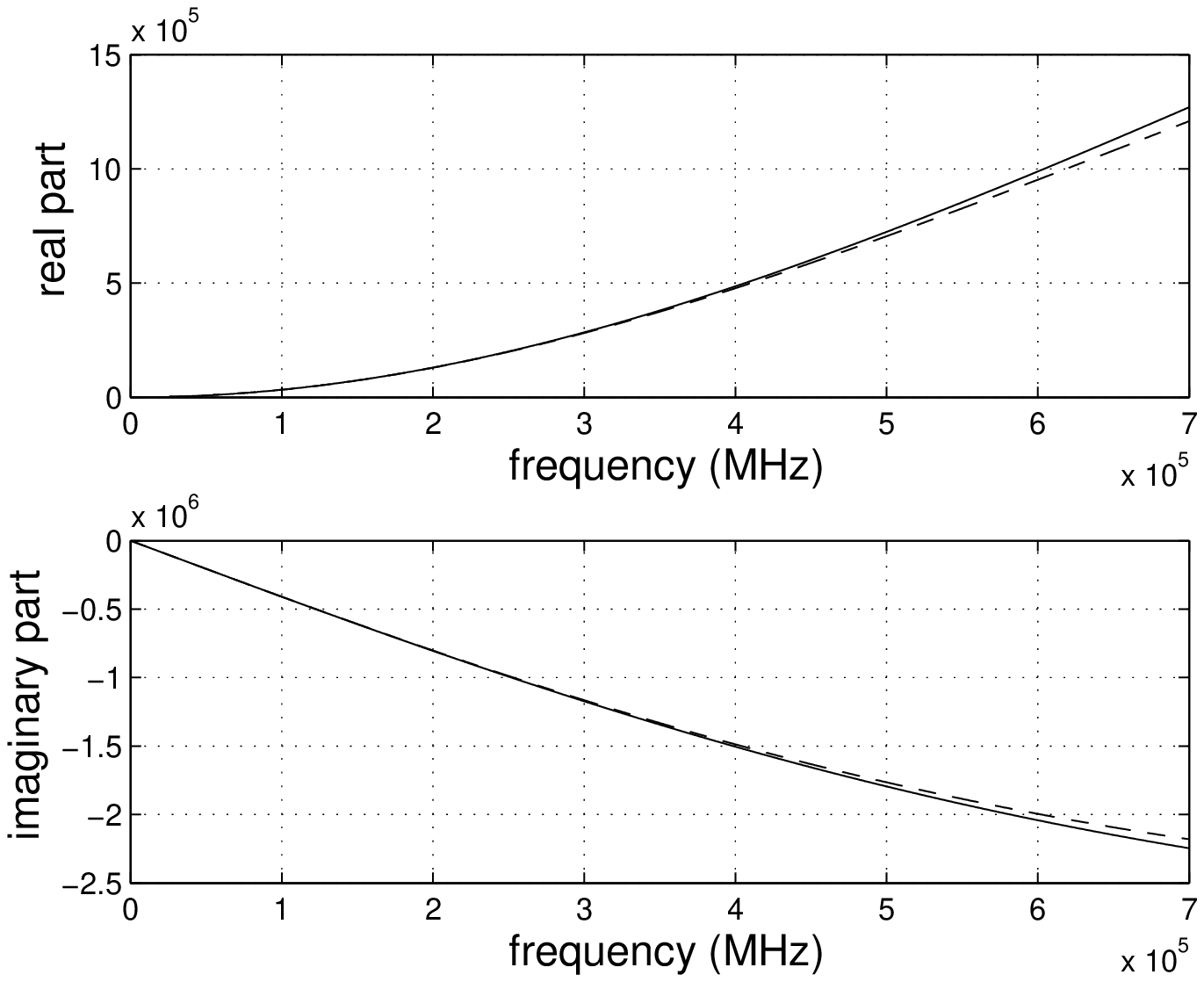}
\includegraphics[height=5.1cm,angle=0]{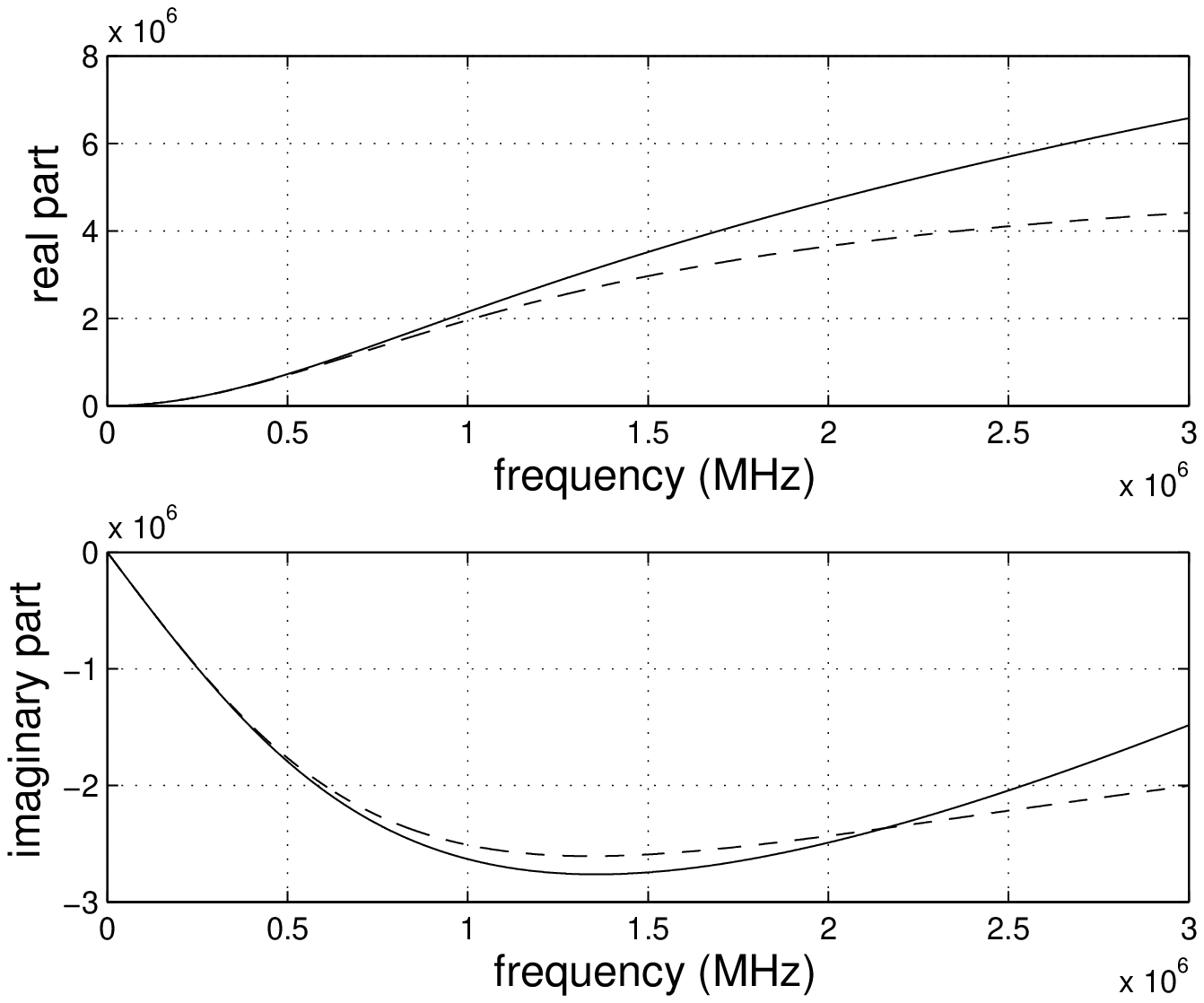}
\end{center}
\caption{Comparison of the real and imaginary parts of $\alpha_*^{nsw}(\omega)$ (dashed) and 
$\alpha_*^{tv}(\omega) + \frac{(-\i\,\omega)}{c_1}$ (full) for the frequency ranges 
$[0\,,\,7\cdot 10^5] MHz$ and $[0\,,\,3\cdot 10^6]$ MHz. Acoustic frequencies can be measured up to about $60\,MHz$.}
\label{fig:comp}
\end{figure}
\end{rema}

For the rest of this paper, $\hat p(\k,\cdot)$ and $\F\{p\}(\k,\cdot)$ denote the Fourier transform of $p(\x,\cdot)$ with 
respect to $\x$. 
To derive the operator equation~(\ref{fredinteq}) we require a representation of the solution of the thermo-viscous wave 
equation and its time reverse in the \emph{wave vector - time domain}.

\begin{lemm}\label{lemm:tva} 
Let 
\begin{equation}\label{defs}
\begin{aligned}
    \mbox{$c_0,\,\tau_1>0$, $\quad \tau_2\geq 0$, $\quad k_c:= \frac{2}{\tau_1\,c_0}$, 
    $\quad\mu(k) := c_0\,\frac{k^2}{k_c}\quad$ and } \\    
   \vartheta(k) := c_0\,k\,\sqrt{1-\frac{k^2}{k_c^2}} 
   \qquad\mbox{for} \qquad k>0 \,.
\end{aligned}
\end{equation}
If $p$ solves  
\begin{equation*}
\begin{aligned}
     \left(\mbox{Id} + \tau_1\frac{\partial}{\partial t}\right)\,\Delta\,p
     - \frac{1}{c_0^2}\,\frac{\partial^2 p}{\partial t^2} 
     = - \frac{\phi}{c_0^2}\,\left(\mbox{Id} + \tau_2\,\frac{\partial }{\partial t}\right)\,
            \delta(t)
\end{aligned}
\end{equation*} 
and $\x\mapsto p(\x,t)$ is tempered for $t\in\R$, then 
$$
   \hat p(\k,t) 
       = \hat\phi(\k)\,\left(\mbox{Id} + \tau_2\,\frac{\partial }{\partial t}\right)\left( 
          \frac{e^{-\mu(k)\,t}}{(2\,\pi)^{3/2}}\,\frac{ \sin(\vartheta(k)\,t)}{\vartheta(k)}\,H(t) \right) 
$$ 
for $k:=|\k|$ with $\k\in\R^3$. Here $H(t)$ denotes the \emph{Heaviside} function.
\end{lemm}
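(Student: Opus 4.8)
The plan is to apply the spatial Fourier transform to the thermo-viscous wave equation and reduce it to a family of ordinary differential equations in $t$ parametrized by the wave vector $\k$. Taking $\F\{\cdot\}(\k,\cdot)$ of both sides and using $\F\{\Delta p\}(\k,t) = -k^2\,\hat p(\k,t)$ with $k:=|\k|$, the equation becomes
\begin{equation*}
  -k^2\,\Bigl(\hat p + \tau_1\,\frac{\partial \hat p}{\partial t}\Bigr)
   - \frac{1}{c_0^2}\,\frac{\partial^2 \hat p}{\partial t^2}
    = -\frac{\hat\phi(\k)}{c_0^2}\,\Bigl(\mbox{Id}+\tau_2\,\frac{\partial}{\partial t}\Bigr)\delta(t)\,,
\end{equation*}
or, after multiplying by $-c_0^2$,
\begin{equation*}
  \frac{\partial^2 \hat p}{\partial t^2}
   + \tau_1\,c_0^2\,k^2\,\frac{\partial \hat p}{\partial t}
   + c_0^2\,k^2\,\hat p
    = \hat\phi(\k)\,\Bigl(\mbox{Id}+\tau_2\,\frac{\partial}{\partial t}\Bigr)\delta(t)\,.
\end{equation*}
First I would treat this as a damped-oscillator ODE with impulsive forcing and the causal (zero) initial condition $\hat p|_{t<0}=0$ inherited from $p|_{t<0}=0$.

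Next I would solve the homogeneous equation $\ddot y + \tau_1 c_0^2 k^2 \dot y + c_0^2 k^2 y = 0$. Its characteristic roots are $\lambda_\pm = -\tfrac{1}{2}\tau_1 c_0^2 k^2 \pm \sqrt{\tfrac14\tau_1^2 c_0^4 k^4 - c_0^2 k^2}$; with the abbreviations of~(\ref{defs}), namely $k_c = 2/(\tau_1 c_0)$, $\mu(k) = c_0 k^2/k_c = \tfrac12\tau_1 c_0^2 k^2$ and $\vartheta(k) = c_0 k\sqrt{1-k^2/k_c^2}$, one checks $\lambda_\pm = -\mu(k) \pm \i\,\vartheta(k)$ (for $k<k_c$; the case $k\ge k_c$ gives the analytic continuation, where $\vartheta$ becomes imaginary and $\sin(\vartheta t)/\vartheta$ stays real). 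Hence the causal fundamental solution of the operator $\partial_t^2 + \tau_1 c_0^2 k^2 \partial_t + c_0^2 k^2$, i.e.\ the response to $\delta(t)$ with vanishing past, is the classical
\begin{equation*}
  g(\k,t) = \frac{e^{-\mu(k)\,t}\,\sin(\vartheta(k)\,t)}{\vartheta(k)}\,H(t)\,,
\end{equation*}
which satisfies $g(\k,0^+)=0$, $\dot g(\k,0^+)=1$; this is a standard Duhamel/Green's-function computation that I would verify by noting the jump in $\dot g$ across $t=0$ produces exactly $\delta(t)$. Applying the operator $(\mbox{Id}+\tau_2\partial_t)$ to the forcing, by linearity and time-invariance the response to $(\mbox{Id}+\tau_2\partial_t)\delta(t)$ is $(\mbox{Id}+\tau_2\partial_t)g(\k,t)$, so $\hat p(\k,t) = \hat\phi(\k)\,(\mbox{Id}+\tau_2\partial_t)g(\k,t)$. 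Restoring the $(2\pi)^{-3/2}$ normalization factor from the convention $\F^{-1}\{e^{\i\k\cdot\x}/(2\pi)^{3/2}\}=\delta$ used in the paper, this is precisely the claimed formula
\begin{equation*}
  \hat p(\k,t)
   = \hat\phi(\k)\,\Bigl(\mbox{Id}+\tau_2\,\frac{\partial}{\partial t}\Bigr)
     \Bigl(\frac{e^{-\mu(k)\,t}}{(2\pi)^{3/2}}\,\frac{\sin(\vartheta(k)\,t)}{\vartheta(k)}\,H(t)\Bigr)\,.
\end{equation*}

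The main obstacle — beyond bookkeeping of the $(2\pi)^{3/2}$ normalization and the sign conventions — is justifying that the temperedness of $\x\mapsto p(\x,t)$ makes the spatial Fourier transform legitimate and, more subtly, that the distributional solution of the ODE with the one-sided condition $\hat p|_{t<0}=0$ is \emph{unique}, so that $g$ is the only candidate. I would handle uniqueness by the usual argument that the difference of two such solutions solves the homogeneous ODE with vanishing data on $(-\infty,0)$, hence is identically zero, and handle the passage to $k\ge k_c$ by observing that both sides of the identity are entire in $k$ (the quotient $\sin(\vartheta t)/\vartheta$ and $e^{-\mu t}$ are entire functions of $\vartheta^2$ and of $\mu$, which are polynomials in $k$), so the formula, established for $0<k<k_c$, extends by analytic continuation. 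I would also briefly note that differentiating $H(t)$ in the $\tau_2$-term is harmless because $g(\k,0^+)=0$, so no extra $\delta$ is generated there; only the $\dot g$ jump contributes.
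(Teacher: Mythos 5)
Your argument follows essentially the same route as the paper's own proof: spatial Fourier transform reduces the PDE to a damped-oscillator ODE in $t$, the characteristic roots $-\mu(k)\pm\i\,\vartheta(k)$ give the causal kernel $e^{-\mu(k)t}\,\sin(\vartheta(k)t)/\vartheta(k)\,H(t)$, and the case $\tau_2\neq 0$ is obtained by applying $\left(\mbox{Id}+\tau_2\,\frac{\partial}{\partial t}\right)$ to the $\tau_2=0$ solution. Your additional remarks on uniqueness of the causal solution, the extension to $k\geq k_c$, and the $(2\,\pi)^{-3/2}$ normalization merely make explicit points the paper leaves implicit, so the proposal is correct and matches the paper's approach.
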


\begin{proof} 
First we consider the case $\tau_2=0$. 
Fourier transform of the wave equation with respect to the space variable  yields
\begin{equation*}
\begin{aligned}
     -k^2\,\left(\mbox{Id} + \tau_1\frac{\partial}{\partial t}\right)\,\hat p 
     - \frac{1}{c_0^2}\,\frac{\partial^2 \hat p}{\partial t^2} 
     = - \frac{\hat\phi}{c_0^2}\,\delta(t)\,.
\end{aligned}
\end{equation*}
With Variation of Constants we obtain the characteristic equation
$$
         \lambda^2 + \tau_1\,c_0^2\,k^2\,\lambda + c_0^2\,k^2 = 0
$$
which has the solution
$$
          \lambda_{1,2} = - \mu(k) \pm\,\i\, \vartheta(k)\,
$$
with $\mu$ and $\vartheta$ as in~(\ref{defs}). The ansatz
$$
       C_1\,e^{\lambda_1\,t}\,H(t) + C_2\,e^{\lambda_2\,t}\,H(t)
$$
yields the following solution in the wave number - time domain 
$$
   \hat p(\k,t) 
       = \hat \phi(\k)\,\frac{e^{-\mu(k)\,t}}{(2\,\pi)^{3/2}}\,\frac{ \sin(\vartheta(k)\,t)}{\vartheta(k)}\,H(t)\,. 
$$ 
The solution for the case $\tau_2\not=0$ is obtained by application of the operator 
$\left(\mbox{Id} + \tau_2\,\frac{\partial }{\partial t}\right)$ to the last result, which proves the lemma. 
\end{proof}

\subsection{Waves backward in time}

Because the time reversed wave equation is obtained by the substitution $t\to -t$, the proof of Lemma~\ref{lemm:tva} 
implies the following lemma.

\begin{lemm}\label{lemm:tvb} 
Let~(\ref{defs}) hold. If $q$ solves  
\begin{equation*}
\begin{aligned}
     \left(\mbox{Id} - \tau_1\frac{\partial}{\partial t}\right)\,\Delta\,q
     - \frac{1}{c_0^2}\,\frac{\partial^2 q}{\partial t^2} 
     = - \frac{\phi}{c_0^2}\,\left(\mbox{Id} - \tau_2\,\frac{\partial }{\partial t}\right)\,
            \delta(t)
\end{aligned}
\end{equation*} 
and $\x\mapsto q(\x,t)$ is tempered for $t\in\R$, then 
$$
   \hat q(\k,t) 
       = - \hat\phi(\k)\,\left(\mbox{Id} - \tau_2\,\frac{\partial }{\partial t}\right)\left( 
          \frac{e^{\mu(k)\,t}}{(2\,\pi)^{3/2}}\,\frac{ \sin(\vartheta(k)\,t)}{\vartheta(k)}\,H(t) \right) 
$$ 
for $k:=|\k|$ with $\k\in\R^3$.
\end{lemm}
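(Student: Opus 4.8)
The plan is to obtain this from Lemma~\ref{lemm:tva} by the time reflection $t\mapsto -t$, as the preceding remark indicates. First I would set $\tilde q(\x,t):=q(\x,-t)$ and verify that $\tilde q$ solves the \emph{forward} thermo-viscous equation of Lemma~\ref{lemm:tva} with the same $\phi$. The key observation is that $\Delta$ and $\partial_t^2$ commute with the reflection whereas $\partial_t$ anticommutes with it, so evaluating the equation for $q$ at the point $-t$ converts $(\mbox{Id}-\tau_1\frac{\partial}{\partial t})\Delta q$ into $(\mbox{Id}+\tau_1\frac{\partial}{\partial t})\Delta\tilde q$ while $\frac{1}{c_0^2}\partial_t^2 q$ becomes $\frac{1}{c_0^2}\partial_t^2\tilde q$; on the right-hand side, since $\delta(-t)=\delta(t)$ and $\delta'(-t)=-\delta'(t)$, the source $(\mbox{Id}-\tau_2\frac{\partial}{\partial t})\delta(t)$ turns into $(\mbox{Id}+\tau_2\frac{\partial}{\partial t})\delta(t)$. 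Thus $\tilde q$ fulfils exactly the hypothesis of Lemma~\ref{lemm:tva}, and $\x\mapsto\tilde q(\x,t)$ is tempered for every $t\in\R$ because $\x\mapsto q(\x,t)$ is.

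Next I would apply Lemma~\ref{lemm:tva} to $\tilde q$ and carry the resulting formula back to $q$. Since the Fourier transform acts only on the space variable it commutes with the reflection in $t$, hence $\widehat{\tilde q}(\k,t)=\hat q(\k,-t)$, and replacing $t$ by $-t$ recovers $\hat q(\k,t)$. In performing this substitution one has to transform every occurrence of $t$ in the expression of Lemma~\ref{lemm:tva}: the exponent $-\mu(k)t$ becomes $+\mu(k)t$, the Heaviside factor is reflected, and $\sin(\vartheta(k)t)$ changes sign by oddness, which accounts for the overall minus sign in the claim; moreover, because differentiation commutes with $t\mapsto -t$ only up to a sign, the operator $\mbox{Id}+\tau_2\frac{\partial}{\partial t}$ produced by Lemma~\ref{lemm:tva} reverts to $\mbox{Id}-\tau_2\frac{\partial}{\partial t}$. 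Collecting these factors yields the asserted form of $\hat q(\k,t)$.

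I do not expect a substantive obstacle: the argument is essentially a change of variables, and the only thing that needs care is the sign bookkeeping — keeping the reflection consistent inside $\sin$, inside the exponential, inside the Heaviside factor and inside the distributional source, and remembering that $\delta$ and $\delta'$ transform oppositely, which is precisely what makes the $\tau_2$-operator flip sign in each direction of the computation. The one conceptual point I would want to record is that, for the implication to be an equality rather than the mere exhibition of one solution, temperedness of $\x\mapsto q(\x,t)$ for \emph{all} $t\in\R$ is what forbids adding the homogeneous modes $e^{(\mu(k)\pm\i\,\vartheta(k))\,t}$ — whose spatial Fourier amplitudes blow up like $e^{c\,k^2|t|}$ for $t>0$ — so that $q$ is uniquely pinned down and must coincide with the reflection of the Lemma~\ref{lemm:tva} solution. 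The constant $(2\pi)^{-3/2}$ and the Fourier normalisation are inherited verbatim from Lemma~\ref{lemm:tva}.
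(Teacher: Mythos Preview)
Your reflection argument is exactly the manoeuvre the paper points to; its entire proof is the one sentence ``the time reversed wave equation is obtained by the substitution $t\to -t$, [so] the proof of Lemma~\ref{lemm:tva} implies the following lemma.'' The sign bookkeeping you describe for $e^{-\mu t}\to e^{\mu t}$, for $\sin(\vartheta t)\to -\sin(\vartheta t)$, and for the $\tau_2$-operator is correct and reproduces the overall minus sign and the $(\mbox{Id}-\tau_2\,\partial_t)$ in the claim.

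There is one point, though, where reflecting the \emph{statement} of Lemma~\ref{lemm:tva} does not land on the stated formula: your substitution sends $H(t)$ to $H(-t)$, so the $\hat q$ you produce is supported on $t\le 0$, whereas the lemma (and every later use of it, e.g.\ Lemma~\ref{lemm:tv2} and the proof of Lemma~\ref{lemm:tv3}) evaluates $\hat q$ at $t\in(0,T]$ and therefore needs $H(t)$. The paper's phrase ``the \emph{proof} of Lemma~\ref{lemm:tva} implies'' is to be taken literally: one reruns the variation-of-constants argument with the same causal ansatz $C_1 e^{\lambda_1 t}H(t)+C_2 e^{\lambda_2 t}H(t)$ on the backward ODE, whose characteristic roots are now $+\mu\pm\i\,\vartheta$; this yields $H(t)$ directly. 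Your closing uniqueness remark in fact cuts the other way here: the formula in the conclusion itself carries the factor $e^{\mu(k)t}$, which for $t>0$ grows like $e^{c\,k^2}$ in $|\k|$, so the backward causal solution is \emph{not} spatially tempered in general --- this is precisely why the paper immediately introduces the regularisation $\mathcal{R}_D$ in Lemma~\ref{lemm:tv2}. Temperedness therefore cannot be the selection principle for the backward problem; the implicit side condition is $q|_{t<0}=0$, and one has to rerun the proof rather than reflect the statement.
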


\begin{rema}\label{rema:subst}
From this lemma we see that time reversal is equivalent to the following substitutions:
$$
\mu\to -\mu,\,\qquad\tau_1\to-\tau_1\qquad \mbox{and} \qquad\hat \phi\to -\hat \phi \,
$$
in the wave vector-time domain. 
\end{rema}

In order to guarantee the solvability of the time reversed wave equation, either $\phi$ is required to satisfy a "source" 
condition or $\phi$ is regularized.  
For this purpose we introduce the following spaces $\mathcal{G}_D$
\begin{itemize}
\item for $D>0$
           \begin{equation}\label{invgs1}
               \phi \in \mathcal{G}_D \quad :\Leftrightarrow\quad  
                  \mbox{$\exists \varphi_* \in  L^2(\R^3)$ such that $\phi = \phi_* *_\x g_D$} \,,
          \end{equation}

\item for $D<0$
           \begin{equation}\label{invgs2}
               \phi \in \mathcal{G}_D \quad :\Leftrightarrow\quad  
                   \mbox{$\phi^* := \phi *_\x g_{-D}$ exists and $\phi^* \in  L^2(\R^3)$} \,,
\end{equation}

\end{itemize}
where $g_D$ denotes the Gaussian function
\begin{equation}\label{gauss}
   g_D(\x) := (4\,\pi\,D)^{-3/2}\,e^{-\frac{|\x|^2}{4\,D}}\, \qquad (D>0) \,.
\end{equation}
For $D>0$ it follows that $\mathcal{G}_D \subset L^2(\R^3)$ and for $D<0$ $\phi \in \mathcal{G}_D$ 
means formally that the Fourier transform of $\phi$ with respect to the space variable $\x$ has a factor that is exponentially 
increasing with respect to the norm of the wave vector $|\k|$. 
Moreover, we introduce the following \emph{regularization operator} for $D>0$ by
\begin{equation}\label{defRd} 
      \mathcal{R}_D\,\varphi := g_D *_\x \varphi     \qquad\mbox{for}\qquad \varphi\in L^2(\Omega)\,. 
\end{equation}
For $\varphi\in L^2(\Omega)$ we have $\mathcal{R}_D\,\varphi \in \mathcal{G}_D$ and $\varphi=(\mathcal{R}_D\,\varphi)_*$.

\begin{lemm}\label{lemm:tv2} 
Let~(\ref{defs}) hold, $\tau_2=\tau_1$, 
\begin{equation}\label{defs2}
\begin{aligned}
    \epsilon,\,T>0,\quad D:=2\,\left(\frac{c_0}{k_c}+\epsilon\right)\,T 
\end{aligned}
\end{equation}
and $\phi\in \mathcal{G}_D$. 
Then the solution $q$ of wave equation in Lemma~\ref{lemm:tvb} exists on the interval $[0,T]$ and  
$q(\cdot,t) \in L^2(\R^3)$ for $t\in [0,T]$. 
\end{lemm}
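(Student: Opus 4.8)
\emph{Proof sketch.} The plan is to \emph{construct} the time‑reversed solution directly from the representation in Lemma~\ref{lemm:tvb} and then verify, using the Gaussian smoothing built into the hypothesis $\phi\in\mathcal{G}_D$, that the resulting $\hat q(\cdot,t)$ is square‑integrable on $\R^3$ for every $t\in[0,T]$; by Plancherel this gives $q(\cdot,t)\in L^2(\R^3)$, and uniqueness among tempered solutions is already contained in Lemma~\ref{lemm:tvb}. Since $\tau_2=\tau_1$, the formula there gives, for $t>0$ and $k:=|\k|$,
\[
  \hat q(\k,t)=-\frac{\hat\phi(\k)}{(2\,\pi)^{3/2}}\,e^{\mu(k)t}\Big(\tfrac{\sin(\vartheta(k)t)}{\vartheta(k)}-\tau_1\big(\tfrac{\mu(k)\sin(\vartheta(k)t)}{\vartheta(k)}+\cos(\vartheta(k)t)\big)\Big),
\]
so the whole problem reduces to bounding the $k$-growth of this expression, uniformly for $t\in[0,T]$.

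First I would split wave‑vector space into the propagating region $k\le k_c$ and the overdamped region $k>k_c$. For $k\le k_c$, $\vartheta(k)$ is real with $0\le\vartheta(k)\le c_0k_c$ and $0\le\mu(k)\le c_0k_c$; using $|\sin x|\le|x|$ and $|\cos x|\le1$ the bracket is bounded by a constant depending only on $T$ and $\tau_1$, and $e^{\mu(k)t}\le e^{c_0k_cT}$, so $|\hat q(\k,t)|\le C\,|\hat\phi(\k)|$ there, which is square‑integrable on the ball. For $k>k_c$ one has $\vartheta(k)=\i\,\tilde\vartheta(k)$ with $\tilde\vartheta(k):=c_0k\sqrt{k^2/k_c^2-1}$, so the trigonometric functions turn into $\sinh(\tilde\vartheta(k)t)/\tilde\vartheta(k)$ and $\cosh(\tilde\vartheta(k)t)$. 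Using the elementary bounds $\sinh x\le x e^{x}$ and $\cosh x\le e^{x}$ for $x\ge0$, together with the key estimate $\tilde\vartheta(k)\le\mu(k)$ (immediate from $k^2/k_c^2-1\le k^2/k_c^2$), one gets $e^{\mu(k)t}$ times the bracket bounded by $C\,(1+k^2)\,e^{(\mu(k)+\tilde\vartheta(k))T}\le C\,(1+k^2)\,e^{2\mu(k)T}$, where the polynomial factor absorbs the $\mu(k),\tilde\vartheta(k)=O(k^2)$ produced by the $\tau_1$-derivative term. Hence $|\hat q(\k,t)|\le C\,(1+k^2)\,|\hat\phi(\k)|\,e^{2c_0k^2T/k_c}$ for $k>k_c$.

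The decisive step is to feed in $\phi\in\mathcal{G}_D$: writing $\phi = \phi_* *_\x g_D$ with $\phi_*\in L^2(\R^3)$ and $\hat g_D(\k)=(2\,\pi)^{-3/2}e^{-D|\k|^2}$ yields $\hat\phi(\k)=\hat\phi_*(\k)\,e^{-D|\k|^2}$. Substituting this, the exponential factors combine into $e^{-(D-2c_0T/k_c)k^2}$; but the choice $D=2\big(\tfrac{c_0}{k_c}+\epsilon\big)T$ makes this exponent exactly $-2\,\epsilon\,Tk^2<0$, and $(1+k^2)e^{-2\epsilon Tk^2}$ is a bounded function of $k$. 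Therefore $|\hat q(\k,t)|\le C(T,\tau_1,c_0,\epsilon)\,|\hat\phi_*(\k)|$ on all of $\R^3$, so $\hat q(\cdot,t)\in L^2(\R^3)$ with norm controlled by $\|\phi_*\|_{L^2}$, and consequently $q(\cdot,t)=\F^{-1}\{\hat q(\cdot,t)\}\in L^2(\R^3)$ for every $t\in[0,T]$.

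I expect the only real obstacle to be the bookkeeping of the exponential growth in the overdamped region $k>k_c$: one has to be sure that the genuinely $k^2$-exponential part of $\hat q$ grows no faster than $e^{2\mu(k)t}$, which is exactly what the sharp comparison $\tilde\vartheta(k)\le\mu(k)$ and the clean $\sinh$/$\cosh$ bounds provide — any slack there would not be compensated by the Gaussian, and it is precisely this balance that forces the side condition $D>2c_0T/k_c=\tau_1c_0^2T$ appearing (in disguise) throughout the paper.
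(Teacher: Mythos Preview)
Your proof is correct and follows essentially the same route as the paper: split into $k\le k_c$ and $k>k_c$, pass to hyperbolic functions in the overdamped region, and use the Gaussian factor $e^{-Dk^2}$ from $\phi\in\mathcal{G}_D$ together with $\tilde\vartheta(k)\le\mu(k)$ to dominate the $e^{2\mu(k)T}$ growth. The only cosmetic difference is that the paper rewrites the bracket explicitly as $\tfrac{1-\tau_1\mu}{2\vartheta_0}(1-e^{-2\vartheta_0 t})-\tfrac{\tau_1}{2}(1+e^{-2\vartheta_0 t})$ and observes it is $O(1)$, whereas your cruder $\sinh$/$\cosh$ bounds produce an extra $(1+k^2)$ prefactor that is harmlessly absorbed by the remaining Gaussian $e^{-2\epsilon Tk^2}$.
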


\begin{proof}   
Let $k:=|\k|$ for $\k\in\R^3$. We recall that $\hat g_D(\k)=(2\,\pi)^{-3/2}\,e^{-D\,|\k|^2}$. For $\phi\in \mathcal{G}_D$ 
and every $t\in [0,T]$ we have 
$$
    \hat q(\k,t) 
       = - \hat \phi_*(\k) \,e^{-D\,k^2}\,\frac{e^{\mu(k)\,t}}{(2\,\pi)^{3/2}}\,
          \left[  (1 - \tau_1\,\mu(k))\,\frac{ \sin(\vartheta(k)\,t)}{\vartheta(k)} 
                     - \tau_1\,\cos(\vartheta(k)\,t)  \right]
$$ 
according to Lemma~\ref{lemm:tvb}. The functions $k\mapsto \frac{ \sin(\vartheta(k)\,t)}{\vartheta(k)}$ 
and $k\mapsto \cos(\vartheta(k)\,t)$ are real valued and bounded on $D_c := \left\{\k\in\R^3\,\left|\, |\k|\leq k_c\right.\right\}$. 
On $\R^3\backslash D_c$ these functions are real valued but not bounded. 
Because of 
$$
   \i\,\vartheta_0(k)
       := \vartheta(k) 
        = \i\, c_0\,k\,\sqrt{\frac{k^2}{k_c^2} - 1} 
       \qquad\mbox{for}\qquad  k > k_c \,,
$$
$$
     \sin(\i\,x) = \frac{\i}{2}\,(e^x - e^{-x})  \qquad\mbox{and}\qquad 
     \cos(\i\,x) = \frac{1}{2}\,(e^x + e^{-x})  \,, 
$$
we have
$$
   \frac{ \sin(\vartheta\,t)}{\vartheta} 
        = \frac{e^{\vartheta_0\,t} - e^{-\vartheta_0\,t}}{2\,\vartheta_0}  \qquad\mbox{and}\qquad  
   \cos(\vartheta\,t) 
        =  \frac{1}{2}\,(e^{\vartheta_0\,t} + e^{-\vartheta_0\,t})  
$$
on $\R^3\backslash D_c$. 
Thus $\hat q$ has the following representation on $\R^3\backslash D_c$:  
\begin{equation}\label{helpq}
   \hat q(\cdot,t) 
       = - \hat \phi_*\,\frac{e^{(\mu + \vartheta_0)\,t-D\,k^2}}{(2\,\pi)^{3/2}}\,
          \left[  \frac{1 - \tau_1\,\mu}{2\,\vartheta_0}\,(1 - e^{-2\,\vartheta_0\,t})
                     - \frac{\tau_1}{2}\,(1 + e^{-2\,\vartheta_0\,t})  \right] \,.
\end{equation}
From this and 
$$
        e^{(\mu(k) + \vartheta_0(k))\,t - D\,k^2} = \mathcal{O} \left( e^{-\epsilon\,T\,k^2} \right) 
              \qquad\mbox{for}\qquad  k\to\infty\, ,
$$
we infer that $\hat q(\cdot,t) \in L^2(\R^3)$ for $t\in [0,T]$, as was to be shown. 
\end{proof}

It is surprising, that in contrast to non-dissipative media, time reversal in dissipative media 
requires regularized data.

\begin{rema}\label{rema:tv2} 
a) Let $\varphi\in L^2(\R^3)$. We note that analogously as in Lemma~\ref{lemm:tv2}, it follows that the solution $p$ 
in Lemma~\ref{lemm:tva} satisfies ($\mu\to -\mu$, $\tau_1\to-\tau_1$ and $\hat \varphi\to -\hat \varphi$ 
cf. Remark~\ref{rema:subst})
\begin{equation*}
   \hat p(\cdot,t) 
       = \hat \varphi\,\frac{e^{(-\mu + \vartheta_0)\,t}}{(2\,\pi)^{3/2}}\,
          \left[  \frac{1 - \tau_1\,\mu}{2\,\vartheta_0}\,(1 - e^{-2\,\vartheta_0\,t})
                     + \frac{\tau_1}{2}\,(1 + e^{-2\,\vartheta_0\,t})  \right] \,.
\end{equation*}
Here the first exponential term is bounded but not exponentially decreasing and thus, in general, $\phi_T:=p|_{t=T}$ 
is not an element of $\mathcal{G}_D$ for $D$ as in~(\ref{defs2}). Of course if $\hat \varphi$ oscillates appropriately, 
then $\phi_T\in\mathcal{G}_D$ is possible. 
As a consequence, in general, the time reversal exists only for regularized data.\\
b) For PAT the time reversed wave $q^{PAT}$ is given by $\frac{\partial q}{\partial t}$, where $q$ is as in Lemma~\ref{lemm:tv2}. 
From~(\ref{helpq}), it follows that
$$
    \frac{\partial q}{\partial t} \in L^2(\R^3) 
     \quad \Leftrightarrow \quad 
            \hat {\phi_T}_*\, k^2 \,e^{(\mu + \vartheta_0)\,t-D\,k^2}\,
                          \in L^2(\R^3)     
     \quad \Leftrightarrow \quad 
     \phi_T\in \mathcal{G}_D
$$ 
for $D:=2\,\left(\frac{c_0}{k_c}+\epsilon\right)\,T$ and therefore the time reversal problem for PAT with the 
regularized data $\mathcal{R}_D\,\phi_T$ has a solution if $\varphi\in L^2(\Omega)$. 
\end{rema}

\begin{rema}\label{rema:tv3} 
We note that a different and challenging regularization can be used that modifies $\phi_T$ only at wave vectors $\k$ 
with large norm such that the oscillations in $\hat \phi_T$ guarantee time reversal and 
$supp(\mathcal{R}(\phi_T))\subseteq \Omega$. But this issue is beyond the scope of this paper. 
\end{rema}

\section{The operator $\J_T:\mathcal{G}_D\to L^2(\R^3)$}
\label{sec-opJ}

Let $\mathcal{G}_D$ be defined as in~(\ref{invgs1}) and~(\ref{invgs2}). For $J\in\mathcal{G}_{-D}$ and $\phi\in \mathcal{G}_D$, 
we have formally 
$$
           \F\{J *_\x \phi\} = \hat J \,\hat\phi = J^*\,\hat g_D^{-1} *_\x \phi_*\,\hat g_D = J^* *_\x \phi_* \,. 
$$
Because $J^*,\,\phi_* \in L^2(\R^3)$, the last product as well as its inverse Fourier transform exist. 
Hence we define the convolution of $J\in\mathcal{G}_{-D}$ and $\phi\in \mathcal{G}_D$ by 
\begin{equation}\label{defconvJ}
    J *_\x \phi  := J^* *_\x \phi_* = \F^{-1}\{ \hat J^* \, \hat \phi_*\}\,.
\end{equation}
and consider $J$ as the convolution kernel of an operator $\J:\mathcal{G}_D\to L^2(\R^3)$.

\begin{prop}\label{prop:uT}
Let~(\ref{defs}) hold, $T$, $D$ be as in Lemma~\ref{lemm:tv2} and 
\begin{equation}\label{defJ}
    \hat J(\k)
        := (2\,\pi)^{-3/2}\,\frac{\sin^2(\vartheta(\k)\,T)}{\vartheta^2(\k)} 
        \qquad\mbox{for}\qquad  \k\in\R^3\,. 
\end{equation}
Then $\hat J:=\hat w(\cdot,T)$ holds, where $w$ is the solution of wave equation~(\ref{waveequ}) with $\xi(\x)=\delta(\x)$ 
and $J\in \mathcal{G}_{-D}$. If $\xi\in \mathcal{G}_{D}$, then the solution of~(\ref{waveequ}) satisfies 
$w(\cdot,t)=\J_t\,\xi\in L^2(\R^3)$ for $t\in [0,T]$. 
\end{prop}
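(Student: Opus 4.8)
The plan is to move to the spatial Fourier variable, where~(\ref{waveequ}) becomes a constant–coefficient second–order ODE in $t$ that can be integrated explicitly; the closed form for $\hat J$ and both $L^2$–statements then follow from that solution formula together with an asymptotic estimate of exactly the kind already used in the proof of Lemma~\ref{lemm:tv2}. First I would Fourier transform~(\ref{waveequ}) in $\x$, taking the solution with vanishing Cauchy data $w|_{t=0}=\partial_t w|_{t=0}=0$ (this is the solution meant by $\J_t$). Using the abbreviations in~(\ref{defs}) and $c_1=2\,c_0$, the spatial Fourier symbol of the elliptic part of~(\ref{waveequ}) collapses to $4\,\vartheta(k)^2 = c_1^2k^2-\tfrac{\tau_1^2 c_0^2 c_1^2}{4}k^4$ (with $k=|\k|$), so that $t\mapsto\hat w(\k,t)$ solves
\begin{equation*}
   \frac{\partial^2 \hat w}{\partial t^2}+4\,\vartheta(k)^2\,\hat w=2\,\hat\xi(\k)\qquad\text{on }[0,T],\qquad \hat w(\k,0)=\partial_t\hat w(\k,0)=0 .
\end{equation*}

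Solving this ODE (a constant particular solution plus the homogeneous oscillation) gives
\begin{equation*}
   \hat w(\k,t)=\hat\xi(\k)\,\frac{1-\cos(2\,\vartheta(k)\,t)}{2\,\vartheta(k)^2}
              =\hat\xi(\k)\,\frac{\sin^2(\vartheta(k)\,t)}{\vartheta(k)^2},\qquad t\in[0,T],
\end{equation*}
with the finite limiting value $\hat\xi(\k)\,t^2$ understood at $k=0$ and at $k=k_c$, where $\vartheta$ vanishes. Taking $t=T$ and $\xi=\delta$, i.e. $\hat\xi\equiv(2\pi)^{-3/2}$, reproduces precisely $\hat J$ as defined in~(\ref{defJ}); for a general source $\xi$ the identity $w(\cdot,t)=\J_t\xi$ is then just the definition of $\J_t$, so it remains only to establish the claimed $L^2$–memberships.

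For $J\in\mathcal{G}_{-D}$, by~(\ref{invgs2}) it suffices to show $J^{*}:=J*_\x g_D\in L^2(\R^3)$, and since $\hat g_D(\k)=(2\pi)^{-3/2}e^{-D|\k|^2}$ this amounts to $\hat J(\k)\,e^{-D|\k|^2}\in L^2(\R^3)$. On the ball $D_c=\{\k\in\R^3\,:\,|\k|\le k_c\}$ the function $\vartheta$ is real and $\sin^2(\vartheta T)/\vartheta^2\le T^2$, so the integrand is bounded on a compact set; on $\R^3\setminus D_c$ I would write $\vartheta=\i\,\vartheta_0$ as in the proof of Lemma~\ref{lemm:tv2}, so that $\sin^2(\vartheta T)/\vartheta^2=\sinh^2(\vartheta_0 T)/\vartheta_0^2=\mathcal{O}\!\big(e^{2\,\vartheta_0(k)\,T}\big)$, and since $\vartheta_0(k)=c_0\,k\sqrt{k^2/k_c^2-1}\sim c_0\,k^2/k_c$ while $D=2\,(\tfrac{c_0}{k_c}+\epsilon)\,T$, one gets $\hat J(\k)\,e^{-D|\k|^2}=\mathcal{O}\!\big(e^{-2\,\epsilon\,T\,|\k|^2}\big)$ as $|\k|\to\infty$ — the same Gaussian decay already exploited for Lemma~\ref{lemm:tv2}. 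Hence $J\in\mathcal{G}_{-D}$. For $\xi\in\mathcal{G}_D$, write $\xi=\xi_{*}*_\x g_D$ with $\xi_{*}\in L^2(\R^3)$, so that (up to a harmless constant) $\hat w(\k,t)=\hat\xi_{*}(\k)\,e^{-D|\k|^2}\,\sin^2(\vartheta(k)t)/\vartheta(k)^2$. The multiplier $\k\mapsto e^{-D|\k|^2}\sin^2(\vartheta(k)t)/\vartheta(k)^2$ is continuous on $\R^3$, bounded by $T^2$ on $D_c$, and on the complement is dominated — using that $u\mapsto\sinh^2 u/u^2$ is increasing — by its value at $t=T$, which decays like $e^{-2\epsilon T|\k|^2}$; thus the multiplier is bounded on $\R^3$, uniformly for $t\in[0,T]$. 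Multiplying $\hat\xi_{*}\in L^2$ by this bounded multiplier shows $\hat w(\cdot,t)\in L^2(\R^3)$, i.e. $w(\cdot,t)=\J_t\xi\in L^2(\R^3)$ for every $t\in[0,T]$.

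The reduction to the ODE and its solution are routine once the symbol of the elliptic operator has been simplified; the one point that needs real care is the asymptotic matching in the last paragraph, namely that the Gaussian weight $e^{-D|\k|^2}$ with $D=2\,(\tfrac{c_0}{k_c}+\epsilon)\,T$ just beats the exponential blow-up of $\sinh^2(\vartheta_0(k)T)$ on $|\k|>k_c$. This works precisely because $2\,\vartheta_0(k)$ has the same leading order $\sim 2\,c_0|\k|^2/k_c$ as the exponent $\mu(k)+\vartheta_0(k)$ appearing in~(\ref{helpq}), which is exactly why the domain-size/attenuation restriction built into $D$ reappears here. No separate existence/uniqueness argument is needed beyond the explicit Fourier-side formula, which is uniquely determined by the zero Cauchy data.
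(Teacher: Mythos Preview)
Your proof is correct and follows essentially the same route as the paper: Fourier transform~(\ref{waveequ}) in space to obtain a constant-coefficient ODE in $t$, solve it explicitly to recover the formula~(\ref{defJ}), and then control $\hat J(\k)\,e^{-D|\k|^2}$ on $\{|\k|>k_c\}$ via the same asymptotic $\vartheta_0(k)\sim c_0 k^2/k_c$ already used for Lemma~\ref{lemm:tv2}. Your treatment is slightly more explicit in two places --- you spell out the zero Cauchy data and the double-angle identity $1-\cos(2\vartheta t)=2\sin^2(\vartheta t)$, and you justify the uniform-in-$t$ bound on $[0,T]$ via monotonicity of $u\mapsto\sinh^2 u/u^2$ --- whereas the paper simply records the Fourier-side ODE and the limiting exponent; but the argument is the same.
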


\begin{proof}
The first claim follows from the fact that $\hat w$ solves
\begin{equation*}
     -|\k|^  2\,\hat w 
        + \frac{\tau_1^2\,c_0^2}{4}\,|\k|^4\,\hat w 
        - \frac{1}{c_1^2}\,\frac{\partial^2 \hat w}{\partial t^2} 
          = - \frac{2}{(2\,\pi)^{3/2}\,c_1^2}  \qquad\mbox{with} \qquad c_1:=2\,c_0\,,
\end{equation*}
which is equivalent to wave equation~(\ref{waveequ}) with $\xi(\x)=\delta(\x)$.  

For the second claim. 
Let $\epsilon>0$, $D := 2\,\left(\frac{c_0}{k_c}+\epsilon\right)\,T$ and 
$k:=|\k|$ for $\k\in\R^3$. Because 
\begin{itemize}
\item $\vartheta(k) =: \i\,\vartheta_0(k)$ is real for $k\leq k_c$, 

\item $\vartheta(k)$ is imaginary for $k> k_c$ and $\sin(\i\,x) = \i\,\sinh(x)$ and 

\item $a(k) :=\lim_{k\to\infty} \left(k\,\sqrt{\frac{k^2}{k_c^2}-1} - \frac{k^2}{k_c}\right) = - \frac{k_c}{2}$,

\end{itemize}
it follows that
$$
    b(k) := \lim_{k\to\infty} 2\,\vartheta_0\,T - D\,k^2 
       = 2\,c_0\,T\,a(k) - 2\,\epsilon\,T\,k^2 
       =  - c_0\,k_c\,T - 2\,\epsilon\,T\,k^2 
$$
and
$$
   \hat J(\k) =  \hat J^*(\k) \,  \,e^{D\,|\k|^2} 
         \qquad\mbox{with}\qquad
   \hat J^*(\k) := (2\,\pi)^{-3/2}\,\frac{\sin^2(\vartheta(k)\,T)}{\vartheta^2(k)}\,e^{-D\,|\k|^2} \,
$$
holds, where $\hat J^*$ is oscillating similarly as $\frac{\sin^2(\vartheta(k))}{\vartheta^2(k)}$ for $k<k_c$ and 
decreases exponentially like $e^{b(k)}$ for $k>k_c$. From this and $\epsilon>0$, 
it follows that
$$
  \|\hat J^* \|_{L^2}^2
    = 4\,\pi\,\int_0^\infty \left| \frac{\sin^2(\vartheta(k))}{\vartheta^2(k)} \right| 
              \,e^{-D\,|\k|^2} \,k^2\,\d k 
              < \infty \,, 
$$
which proves $J\in \mathcal{G}_{-D}$. The last claim follows from the fact that the convolution is well-defined 
by~(\ref{defconvJ}) and $\J_t\,\xi\in L^2(\R^3)$ for $t\in [0,T]$ and $\xi\in \mathcal{G}_{D}$. 
\end{proof}

\begin{prop}\label{prop:Kd}
Let~(\ref{defs}) hold and $T$, $D$ be as in Lemma~\ref{lemm:tv2}. Then the operator $\J_T:\mathcal{G}_D\to L^2(\Omega)$ 
defined by~(\ref{defconvJ}) is compact if $\mathcal{G}_D$ is endowed with the $L^2-$norm. 
\end{prop}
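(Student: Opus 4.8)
The plan is to realize $\J_T$, composed with restriction to the bounded region $\Omega$, as a Hilbert--Schmidt integral operator; the membership $J^*\in L^2(\R^3)$ is already supplied by Proposition~\ref{prop:uT}, and boundedness of $\Omega$ is the feature that upgrades a convolution operator to a compact one. First I would fix the setting. The bijection $\phi_*\mapsto \phi = \phi_* *_\x g_D$ identifies $\mathcal{G}_D$ isometrically with $L^2(\R^3)$, and this is the Hilbert-space structure meant by ``$\mathcal{G}_D$ endowed with the $L^2$-norm''. (With the $L^2(\R^3)$-norm of $\phi$ itself the statement would fail: in the space variables $\J_T$ is the Fourier multiplier with symbol $(2\,\pi)^{-3/2}\,\vartheta^{-2}(\k)\,\sin^2(\vartheta(\k)\,T)$, which is unbounded for $|\k|>k_c$, so $\J_T$ would not even be bounded.) By Proposition~\ref{prop:uT} we already know $J\in\mathcal{G}_{-D}$, i.e. $J^* := J *_\x g_D\in L^2(\R^3)$, equivalently
\begin{equation*}
   \hat J^*(\k) = (2\,\pi)^{-3/2}\,\frac{\sin^2(\vartheta(\k)\,T)}{\vartheta^2(\k)}\,e^{-D\,|\k|^2}\in L^2(\R^3)\,;
\end{equation*}
in fact $2\,\vartheta_0(k)\,T - D\,k^2\le -2\,\epsilon\,T\,k^2$ for $k>k_c$, so $\hat J^*$ (and each of its derivatives) decays faster than any polynomial, though only $J^*\in L^2(\R^3)$ will be used.

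Next I would unwind definition~(\ref{defconvJ}): for $\phi = \phi_* *_\x g_D\in\mathcal{G}_D$ one has $\J_T\,\phi = J *_\x\phi = J^* *_\x\phi_* = \F^{-1}\{\hat J^*\,\hat\phi_*\}$, which by the convolution theorem is $(2\,\pi)^{-3/2}$ times the ordinary convolution $J^* * \phi_*$. Restricting to $\Omega$,
\begin{equation*}
   (\J_T\,\phi)(\x) = (2\,\pi)^{-3/2}\int_{\R^3}J^*(\x-\y)\,\phi_*(\y)\,\d \y\,,\qquad \x\in\Omega\,,
\end{equation*}
so $\J_T:L^2(\R^3)\to L^2(\Omega)$ is an integral operator with kernel $\kappa(\x,\y):=(2\,\pi)^{-3/2}\,J^*(\x-\y)$. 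Since $\Omega$ is bounded, translation invariance of Lebesgue measure gives
\begin{equation*}
   \int_\Omega\int_{\R^3}|\kappa(\x,\y)|^2\,\d \y\,\d \x = (2\,\pi)^{-3}\,|\Omega|\,\|J^*\|_{L^2(\R^3)}^2 < \infty\,,
\end{equation*}
hence $\kappa\in L^2(\Omega\times\R^3)$, so $\J_T$ is Hilbert--Schmidt and therefore compact, which is the assertion. Alternatively, since $\hat J^*$ decays faster than any polynomial, $\|J^* * \phi_*\|_{H^1(\R^3)}\le C\,\|\phi_*\|_{L^2(\R^3)}$, and one invokes the compact Sobolev embedding $H^1(\Omega)\hookrightarrow L^2(\Omega)$.

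Given Proposition~\ref{prop:uT}, this is short, and I do not expect a genuine obstacle; the two points that need attention are the reduction of~(\ref{defconvJ}) to an honest convolution (so that $\J_T$ carries the displayed integral kernel) and the use of $|\Omega|<\infty$. The latter is essential rather than cosmetic: on all of $\R^3$ the translation-invariant operator $\phi_*\mapsto J^* * \phi_*$ is never compact, so compactness is purchased precisely by restricting to the bounded measurement region $\Omega$.
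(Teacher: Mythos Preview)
Your argument is correct and matches the paper's approach: identify $\J_T$ with the map $\phi_*\mapsto J^* *_\x \phi_*$ and use $J^*\in L^2(\R^3)$ (from Proposition~\ref{prop:uT}) to conclude that this is a Hilbert--Schmidt integral operator, hence compact. You are in fact more careful than the paper on one point: the paper's one-line proof asserts the Hilbert--Schmidt property for the map $L^2(\R^3)\to L^2(\R^3)$, whereas you correctly observe that it is the boundedness of $\Omega$ that makes the kernel $(\x,\y)\mapsto J^*(\x-\y)$ square-integrable over $\Omega\times\R^3$, and that a nonzero translation-invariant operator on all of $L^2(\R^3)$ is never compact.
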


\begin{proof}
Because the operator $\phi_*\mapsto J^* *_\x \phi_*$ maps $L^2(\R^3)$ into $L^2(\R^3)$ and its kernel $J^*$ lies $L^2(\R^3)$, 
$\J_T$ is a Hilbert-Schmidt operator and thus is compact (cf.~\cite{Alt02}).  
\end{proof}

\section{Derivation of the operator equation}
\label{sec-dopeq}

In this section we derive the operator equation~(\ref{fredinteq}). Throughout this section 
\begin{itemize}
\item [(AD~1)] (\ref{mainass}),~(\ref{defs}),~(\ref{defs2}) hold, $\varphi\in L^2(\Omega)$ with $\Omega\subset\R^3$ open and 
               bounded, $J$ is defined as in Proposition~\ref{prop:uT} and 
              \begin{equation}\label{newdata}
                \phi_T := \phi_T(\varphi) := p^{tv}|_{t=T}  \,, 
              \end{equation} 
              where $p^{tv}$ denotes the solutions of~(\ref{waveeqp})  with $\alpha_*$ defined by~(\ref{Modeltv}). 
              Moreover, $F[\phi_T]$ is defined by~(\ref{defimagf}). 
\end{itemize}
In particular, this means that the regularized time reversed thermo-viscous wave equation has a solution in $L^2(\R^3)$ 
(cf. Remark~\ref{rema:tv2}), the operator $\J_T$ is well-defined and compact and  
$$
   p^{tv}|_{t=T}     
       \approx p^{ksb} \left( \cdot, T + T_1(\cdot) \right)  \,
$$
holds according to Proposition~\ref{prop:shift2}. Here $p^{ksb}$ denotes the causal solutions of~(\ref{waveeqp}) with 
$\alpha_*$ defined by~(\ref{Modeltvksb}). That is to say, the following results hold approximately if time shifted 
causal data are used instead of the non-causal one. 

For the convenience of the reader we recall that 
$F[\mathcal{R}_D\,\phi_T] := q|_{t=T}$,
where $q$ solves the \emph{regularized time reversed thermo-viscous wave equation} 
\begin{equation*}
     \left(\mbox{Id} - \tau_1\frac{\partial}{\partial t}\right)\,\Delta\,q 
     - \frac{1}{c_0^2}\,\frac{\partial^2 q}{\partial t^2} 
     = \frac{\mathcal{R}_D\,(\phi_T)}{c_0^2}\,\left(\mbox{Id} - \tau_2\,\frac{\partial }{\partial t}\right)\,
            \delta'(t) \,.
\end{equation*} 
Notice that the substitution $t\to-t$ leads to $\delta'(-t) = -\delta'(t)$, 
i.e. the right hand side in the time reversed wave equation must have a positive sign for PAT. 
It is best to start with the simplified case $\tau_2=0$.

\begin{lemm}\label{lemm:tv3}
If (AD~1) holds and $\tau_2=0$, then  
\begin{equation*}
   F[\mathcal{R}_D\,\phi_T] = (\mbox{Id}  + c_0^2\,\Delta\,\J_T)\,\mathcal{R}_D\,\varphi \,.
\end{equation*}
\end{lemm}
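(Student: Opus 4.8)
The plan is to Fourier-transform in the space variable $\x$ and work in the wave-vector--time domain, where the map $\varphi\mapsto\phi_T$ and the regularized time-reversal operator $F$ both act as pointwise multipliers in $\k$; the asserted identity then reduces to one elementary trigonometric identity together with the algebraic relation $\mu^2(k)+\vartheta^2(k)=c_0^2k^2$, which is immediate from the definitions~(\ref{defs}). For the \emph{forward data}: the photoacoustic source in~(\ref{waveeqp}) is $\delta'(t)$, so (with $\tau_2=0$) the solution $p^{tv}$ equals $\partial_t u$, where $u$ solves the thermo-viscous wave equation with source $-\varphi\,\delta(t)/c_0^2$, whose wave-vector representation is supplied by Lemma~\ref{lemm:tva}. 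Differentiating in $t$ and evaluating at $t=T>0$ gives, up to the fixed Fourier normalization,
$$
  \hat\phi_T(\k)\;=\;\hat\varphi(\k)\,e^{-\mu(k)T}\left[\cos(\vartheta(k)T)-\mu(k)\,\frac{\sin(\vartheta(k)T)}{\vartheta(k)}\right],
$$
and, since $\mathcal{R}_D$ is convolution with the Gaussian $g_D$, $\widehat{\mathcal{R}_D\phi_T}=\hat g_D\,\hat\phi_T$ introduces the further factor $\hat g_D(\k)\propto e^{-D|\k|^2}$.

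For the \emph{time-reversed data} I would apply the same reduction to~(\ref{tvwaveeqrev}) with $\tau_2=0$, noting that the substitution $t\to-t$ turns $\delta'$ into $-\delta'$ --- this is why the right-hand side there carries a plus sign --- so that $q^{tv}=\partial_t\tilde q$, where $\tilde q$ solves the time-reversed equation with a $\delta(t)$-source and data $\mathcal{R}_D\phi_T$; its representation comes from Lemma~\ref{lemm:tvb} (the minus sign there combines with the sign of the source to produce a net $+\widehat{\mathcal{R}_D\phi_T}$). Differentiating and putting $t=T$ gives
$$
  \widehat{F[\mathcal{R}_D\phi_T]}(\k)\;=\;\widehat{\mathcal{R}_D\phi_T}(\k)\,e^{\mu(k)T}\left[\cos(\vartheta(k)T)+\mu(k)\,\frac{\sin(\vartheta(k)T)}{\vartheta(k)}\right].
$$
Substituting the expression for $\widehat{\mathcal{R}_D\phi_T}$ from the previous step, the exponentials $e^{\mp\mu(k)T}$ cancel and $\widehat{F[\mathcal{R}_D\phi_T]}(\k)$ equals $\widehat{\mathcal{R}_D\varphi}(\k)$ times $\cos^2(\vartheta(k)T)-\mu^2(k)\,\sin^2(\vartheta(k)T)/\vartheta^2(k)$.

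It then remains to recognise this multiplier as the Fourier symbol of $\mbox{Id}+c_0^2\Delta\,\J_T$. Writing $\cos^2=1-\sin^2$ and using $\mu^2(k)+\vartheta^2(k)=c_0^2k^2$, the multiplier becomes $1-\frac{c_0^2k^2}{\vartheta^2(k)}\sin^2(\vartheta(k)T)$. On the other hand, by the definition~(\ref{defJ}) of $\hat J$ and the convolution convention~(\ref{defconvJ}), the operator $\J_T$ acts on $\mathcal{G}_D$ as multiplication by $\sin^2(\vartheta(k)T)/\vartheta^2(k)$ on the Fourier side, so $\mbox{Id}+c_0^2\Delta\,\J_T$ has Fourier symbol $1-c_0^2k^2\,\sin^2(\vartheta(k)T)/\vartheta^2(k)$ --- precisely the multiplier above. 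Applying $\F^{-1}$ yields the claim.

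The two places I expect to need care are: (i) the bookkeeping around the $\delta'(t)$-sources and the sign under $t\to-t$ (already flagged just before the statement), where an extra $\partial_t$ must be carried through both the forward and the time-reversed building blocks of Lemmas~\ref{lemm:tva} and~\ref{lemm:tvb}; and (ii) the justification that everything above is legitimate for $|\k|>k_c$, where $\vartheta$ becomes imaginary and the kernels $e^{\pm\mu t}\sin(\vartheta t)/\vartheta$, $\sin^2(\vartheta T)/\vartheta^2$ grow exponentially. Point (ii) is exactly why the regularization $\mathcal{R}_D$ with $D=2(c_0/k_c+\epsilon)T$ is imposed: by Lemma~\ref{lemm:tv2} and Remark~\ref{rema:tv2} the factor $e^{-D|\k|^2}$ dominates, so that $\phi_T$, $\mathcal{R}_D\phi_T$, $q^{tv}(\cdot,T)$, $\J_T\mathcal{R}_D\varphi$ and $\Delta\,\J_T\mathcal{R}_D\varphi$ all lie in $L^2(\R^3)$ and all the formal multiplier manipulations are rigorous. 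The trigonometric identity itself is unaffected by the analytic continuation $\vartheta=\i\vartheta_0$, since it is polynomial in $\vartheta^2$.
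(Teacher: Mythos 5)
Your proposal is correct and follows essentially the same route as the paper: both pass to the wave-vector--time domain, write $\widehat{F[\mathcal{R}_D\phi_T]}$ as the product of the forward and time-reversed propagator derivatives (so the factors $e^{\mp\mu T}$ cancel and the multiplier is $\cos^2(\vartheta T)-\mu^2\sin^2(\vartheta T)/\vartheta^2$), and then use $\cos^2=1-\sin^2$ together with $\mu^2+\vartheta^2=c_0^2|\k|^2$ to identify the symbol of $\mbox{Id}+c_0^2\Delta\,\J_T$. Your explicit attention to the $\delta'(t)$ bookkeeping and to the role of $\mathcal{R}_D$ for $|\k|>k_c$ matches what the paper delegates to Remark~\ref{rema:tv2}.
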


\begin{proof} 
According to Remark~\ref{rema:tv2}, $\mathcal{R}_D\,(\phi_T)\in \mathcal{G}_D$ with 
$D:=2\,\left(\frac{c_0}{k_c}+\epsilon\right)\,T$ implies that $q(\cdot,t)$ exists for $t\in [0,T]$ and $q(\cdot,T) \in L^2(\R^3)$. 
Let $\hat G_\pm$ denote the solutions of
\begin{equation}\label{eqGpm}
\begin{aligned}
     -|\k|^2\,\left(\mbox{Id} \pm \tau_1\frac{\partial}{\partial t}\right)\,\hat G_\pm
     - \frac{1}{c_0^2}\,\frac{\partial^2 \hat G_\pm}{\partial t^2} 
     = \mp \frac{1}{c_0^2}\,
         \frac{\delta(t)}{(2\,\pi)^{3/2}} \,.
\end{aligned}
\end{equation}
For convenience we set $(\hat\phi_T)_D:=\mathcal{R}_D\,\hat\phi_T$ and $\hat\varphi_D:=\mathcal{R}_D\,\hat\varphi$.   
From the convolution theorem 
$$
  \F\{f\,g\}(\k) = (2\,\pi)^{3/2}\,\hat f(\k)\,\hat g(\k) \qquad\quad \k\in\R^3 \,,
$$
we get 
\begin{equation}\label{repphiTq}
   (\hat\phi_T)_D = (2\,\pi)^{3/2}\,\hat G_+|_{t=T} \,\hat\varphi_D \quad\mbox{and}\quad 
   \hat q|_{t=T} = (2\,\pi)^{3/2}\,\hat G_-|_{t=T} \,(\hat\phi_T)_D    
\end{equation}
and consequently  
$$
   \hat q|_{t=T} = (2\,\pi)^3\,\left[\frac{\partial \hat G_-}{\partial t} \,
                                \frac{\partial \hat G_+}{\partial t}\right]_{t=T} \,\hat\varphi_D\,.
$$
From Lemma~\ref{lemm:tva} and Lemma~\ref{lemm:tvb}, it follows for $t>0$ that 
\begin{equation*}
\begin{aligned}
  \frac{\partial \hat G_-}{\partial t} \, \frac{\partial \hat G_+}{\partial t}
      &=  \frac{1}{(2\,\pi)^3}\,\frac{\partial }{\partial t}\left(e^{\mu\,t}\,\frac{\sin(\vartheta\,t)}{\vartheta} \right)\,
          \frac{\partial }{\partial t}\left(e^{-\mu\,t}\,\frac{\sin(\vartheta\,t)}{\vartheta} \right)\,\\
      &=  \frac{1}{(2\,\pi)^3}\,\left(1 + (\cos^2(\vartheta\,t)-1) - \mu^2\,\frac{\sin^2(\vartheta\,t)}{\vartheta^2}\right) \,. 
\end{aligned}
\end{equation*}
With $\cos^2(\vartheta\,t)-1 = -\sin^2(\vartheta\,t)$ and $\vartheta^2+\mu^2=c_0^2\,|\k|^2$, the last result simplifies to 
\begin{equation*}
\begin{aligned}
  \frac{\partial \hat G_-}{\partial t} \, \frac{\partial \hat G_+}{\partial t}
      =  \frac{1}{(2\,\pi)^3}\,\left(1 - c_0^2\,|\k|^2\,\frac{\sin^2(\vartheta\,t)}{\vartheta^2}\right)\,. 
\end{aligned}
\end{equation*}
With
\begin{equation}\label{relLaplace}
        - \F^{-1}\left\{\frac{|\k|^2}{(2\,\pi)^{3/2}}\right\}(\x) \,*_\x =  \Delta \,,
\end{equation}
we obtain finally 
$$
   F[\mathcal{R}_D\,\phi_T] 
       =  q|_{t=T} 
       = \left(\delta(\x) 
         + c_0^2\,\Delta\, \F^{-1}\left\{\frac{\sin^2(\vartheta\,t)}{(2\,\pi)^{3/2}\,\vartheta^2}\right\} \right) *_\x \varphi_D\,, 
$$
which concludes the proof. 
\end{proof}

We now come to the case $\tau_2=\tau_1\geq 0$.

\begin{theo}\label{theo:tv2}
If (AD~1) holds and $\tau_2=0$ or $\tau_2=\tau_1$, then 
\begin{equation*}
\begin{aligned}
    F[\mathcal{R}_D\,\phi_T] =  \left(\mbox{Id} + \tau_2^2\,c_0^2\,\Delta\right)^2\,\mathcal{R}_D\,\varphi 
                    + c_0^2\,\Delta\,\J_T \,\mathcal{R}_D\,\varphi \,
\end{aligned}
\end{equation*}
\end{theo}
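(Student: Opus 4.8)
The plan is to run the computation of Lemma~\ref{lemm:tv3} once more, this time carrying the extra first--order factors $\left(\mbox{Id}+\tau_2\frac{\partial}{\partial t}\right)$ on the forward wave and $\left(\mbox{Id}-\tau_2\frac{\partial}{\partial t}\right)$ on its time reverse. For $\tau_2=0$ there is nothing to prove: the asserted right--hand side collapses to $(\mbox{Id}+c_0^2\,\Delta\,\J_T)\,\mathcal{R}_D\,\varphi$, which is Lemma~\ref{lemm:tv3}. So assume $\tau_2=\tau_1$. With the abbreviations $(\hat\phi_T)_D:=\mathcal{R}_D\,\hat\phi_T$, $\hat\varphi_D:=\mathcal{R}_D\,\hat\varphi$ from that proof and
\[
   v_\pm(\k,t) := \frac{\partial}{\partial t}\left(e^{\mp\mu\,t}\,\frac{\sin(\vartheta\,t)}{\vartheta}\right)\qquad(t>0),
\]
the time derivative of the $\tau_2=0$ propagation kernel of Lemmas~\ref{lemm:tva}--\ref{lemm:tvb} (here $\mu=\mu(|\k|)$, $\vartheta=\vartheta(|\k|)$ as in~(\ref{defs})), the computation in the proof of Lemma~\ref{lemm:tv3} reads $\hat q|_{t=T}=(v_-v_+)|_{t=T}\,\hat\varphi_D$ in the case $\tau_2=0$. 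For $\tau_2=\tau_1$, Lemma~\ref{lemm:tva} attaches the factor $\left(\mbox{Id}+\tau_2\frac{\partial}{\partial t}\right)$ to $v_+$ in the forward step and Lemma~\ref{lemm:tvb} the factor $\left(\mbox{Id}-\tau_2\frac{\partial}{\partial t}\right)$ to $v_-$ in the reversed step (and no $\delta$-- or $\delta'$--contributions at $t=0$ survive at $t=T>0$), so that
\[
   \hat q|_{t=T}
      = \left[\left(\mbox{Id}-\tau_2\tfrac{\partial}{\partial t}\right)v_-\cdot
               \left(\mbox{Id}+\tau_2\tfrac{\partial}{\partial t}\right)v_+\right]_{t=T}\,\hat\varphi_D .
\]

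I would then expand this product. Writing $s:=\frac{\sin(\vartheta t)}{\vartheta}$, $c:=\cos(\vartheta t)$, so that $s'=c$ and $c'=-\vartheta^2 s$, one gets $v_\pm=e^{\mp\mu t}(c\mp\mu s)$ and $v_\pm'=e^{\mp\mu t}\big[(\mu^2-\vartheta^2)s\mp 2\,\mu\,c\big]$; hence, using $\mu^2+\vartheta^2=c_0^2\,|\k|^2$ exactly as in Lemma~\ref{lemm:tv3},
\[
   v_-v_+=1-c_0^2\,|\k|^2\,s^2,\qquad
   v_-'v_+'=(\mu^2-\vartheta^2)^2\,s^2-4\,\mu^2\,c^2,\qquad
   v_-v_+'-v_-'v_+=2\,\mu\,(\mu^2-\vartheta^2)\,s^2-4\,\mu\,c^2,
\]
and therefore
\[
   \left(\mbox{Id}-\tau_2\tfrac{\partial}{\partial t}\right)v_-\cdot\left(\mbox{Id}+\tau_2\tfrac{\partial}{\partial t}\right)v_+
      = v_-v_+ + \tau_2\,(v_-v_+'-v_-'v_+) - \tau_2^2\,v_-'v_+' .
\]

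The step that makes the answer close up is the relation
\[
   \tau_2\,\mu \;=\; \tau_1\,\mu \;=\; \tfrac12\,\tau_1^2\,c_0^2\,|\k|^2 \;=\; \tfrac12\,\tau_2^2\,c_0^2\,|\k|^2 ,
\]
immediate from $\mu=c_0\,|\k|^2/k_c$ and $k_c=2/(\tau_1\,c_0)$: the term in the expansion that looks linear in $\tau_2$ is in fact quadratic, and no odd power of $\tau_2$ survives. Substituting this together with $c_0^2\,|\k|^2-(\mu^2-\vartheta^2)=2\,\vartheta^2$ and $4\,\mu^2-2\,c_0^2\,|\k|^2=2\,(\mu^2-\vartheta^2)$ (both consequences of $\mu^2+\vartheta^2=c_0^2\,|\k|^2$) and finally the Pythagorean identity $c^2+\vartheta^2 s^2=\cos^2(\vartheta T)+\sin^2(\vartheta T)=1$ at $t=T$, a short calculation collapses the symbol at $t=T$ to
\[
   \big(1-\tau_2^2\,c_0^2\,|\k|^2\big)^2 \;-\; c_0^2\,|\k|^2\,\frac{\sin^2(\vartheta T)}{\vartheta^2} .
\]
Translating this back to physical space exactly as in the closing lines of the proof of Lemma~\ref{lemm:tv3} --- $-|\k|^2\leftrightarrow\Delta$ from~(\ref{relLaplace}) and $\hat J=\frac{\sin^2(\vartheta T)}{(2\,\pi)^{3/2}\vartheta^2}$ from Proposition~\ref{prop:uT} --- turns it into $(\mbox{Id}+\tau_2^2\,c_0^2\,\Delta)^2+c_0^2\,\Delta\,\J_T$ applied to $\mathcal{R}_D\,\varphi$; since $F[\mathcal{R}_D\,\phi_T]=q|_{t=T}$, this is the claim. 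Existence of $q\in L^2(\R^3)$ on $[0,T]$ and well--definedness of the convolutions come from Lemma~\ref{lemm:tv2}, Remark~\ref{rema:tv2} and Propositions~\ref{prop:uT} and~\ref{prop:Kd}.

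I expect the main obstacle to be the bookkeeping in the two middle paragraphs: transporting the operators $\left(\mbox{Id}\pm\tau_2\frac{\partial}{\partial t}\right)$ correctly through forward and backward propagation (including the sign flip of $\delta'(t)$ under $t\to-t$, the powers of $2\,\pi$, and the verification that the $\delta$-- and $\delta'$--terms at $t=0$ do not reach $t=T$), and, above all, spotting the identity $\tau_2\,\mu=\tfrac12\,\tau_2^2\,c_0^2\,|\k|^2$; without that cancellation of the odd-order-in-$\tau_2$ terms the clean operator form $(\mbox{Id}+\tau_2^2\,c_0^2\,\Delta)^2+c_0^2\,\Delta\,\J_T$ would not appear.
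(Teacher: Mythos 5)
Your proposal is correct and follows essentially the same route as the paper: pass to the spatial Fourier symbol, multiply the forward and time-reversed propagators carrying the factors $\left(\mbox{Id}\pm\tau_2\frac{\partial}{\partial t}\right)$, and collapse using $\mu^2+\vartheta^2=c_0^2|\k|^2$ together with $\tau_1\mu=\tfrac12\tau_1^2c_0^2|\k|^2$ (the paper packages the latter as $1-2\tau_1\mu=1-\tau_1^2c_0^2|\k|^2$ after first combining each side into $\pm[-\mu+\tau_1\mu^2-\tau_1\vartheta^2]\frac{\sin(\vartheta t)}{\vartheta}+[1-2\tau_1\mu]\cos(\vartheta t)$ and using $(a-b)(a+b)=a^2-b^2$, whereas you expand the operator product into $v_-v_+ + \tau_2(v_-v_+'-v_-'v_+)-\tau_2^2v_-'v_+'$ first). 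The algebra checks out and the symbol indeed reduces to $(1-\tau_2^2c_0^2|\k|^2)^2-c_0^2|\k|^2\sin^2(\vartheta T)/\vartheta^2$, which is the claimed operator identity.
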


\begin{proof} 
According to Remark~\ref{rema:tv2}, $\mathcal{R}_D\,(\phi_T)\in \mathcal{G}_D$ with 
$D:=2\,\left(\frac{c_0}{k_c}+\epsilon\right)\,T$ implies that $q(\cdot,t)$ exists for $t\in [0,T]$ and 
$q(\cdot,T) \in L^2(\R^3)$. 
As in the proof of Lemma~\ref{lemm:tv3}, it follows that 
\begin{equation}\label{repq}
   \hat q|_{t=T} = (2\,\pi)^3\, [A_- \,A_+]_{t=T} \,\hat\varphi_D \,, 
\end{equation}
where 
$$
   A_\pm
     := \left(\mbox{Id} \pm \tau_1\,\frac{\partial }{\partial t}\right)\,
                         \frac{\partial \hat G_\pm}{\partial t}\,.
$$
and $G_\pm$ are defined by~(\ref{eqGpm}). 
From the Lemmata~\ref{lemm:tva} and~\ref{lemm:tvb}, it follows for $t>0$ that 
\begin{equation*}
\begin{aligned}
    (2\,\pi)^{3/2}\,e^{\pm \mu\,t}\, A_\pm 
      = \pm [- \mu + \tau_1\,\mu^2 - \tau_1\,\vartheta^2]\,\frac{\sin(\vartheta\,t)}{\vartheta} 
        + [1-2\,\tau_1\,\mu]\,\cos(\vartheta\,t) \,, 
\end{aligned}
\end{equation*}
which implies with 
$$
      (a-b)\,(a+b)=a^2-b^2    \qquad\mbox{and}\qquad  
      \cos^2(\vartheta\,t) = 1 - \vartheta^2\,\frac{\sin^2(\vartheta\,t)}{\vartheta^2}
$$
that 
\begin{equation*}
\begin{aligned}
    (2\,\pi)^3\, A_-\,A_+
      = [1-2\,\tau_1\,\mu]^2 
        - \left\{[- \mu + \tau_1\,\mu^2 - \tau_1\,\vartheta^2]^2 + [1-2\,\tau_1\,\mu]^2\,\vartheta^2\right\}
          \,\frac{\sin^2(\vartheta\,t)}{\vartheta^2} \,.
\end{aligned}
\end{equation*}
Employing $\mu^2 + \vartheta^2 = c^2\,|\k|^2$ to the last result yields
\begin{equation*}
\begin{aligned}
    (2\,\pi)^3\, A_-\,A_+
      = [1-2\,\tau_1\,\mu]^2 
        - (\mu^2 + \vartheta^2)\,\frac{\sin^2(\vartheta\,t)}{\vartheta^2} \,.
\end{aligned}
\end{equation*}
Inserting this in~(\ref{repq}) results in
$$
   F[\mathcal{R}_D\,\phi_T] 
       = \hat q|_{t=T} 
       =  [1-2\,\tau_1\,\mu]^2 \,\hat\varphi_D
          - (\mu^2 + \vartheta^2)\,\frac{\sin^2(\vartheta\,T)}{\vartheta^2}\,\hat\varphi_D \,, 
$$
which is due to~(\ref{relLaplace}) and  
$$
      \F^{-1}\left\{\frac{\mu^2 + \vartheta^2}{(2\,\pi)^{3/2}}\right\} *_\x = - c_0^2\,\Delta
$$ 
is equivalent to
$$
    F[\mathcal{R}_D\,\phi_T] 
       =  [1 + \tau_1^2\,c_0^2\,\Delta]^2 \,\mathcal{R}_D\,\varphi
          + c_0^2\,\Delta \,\J_T\,\mathcal{R}_D\,\varphi\,.
$$
As was to be shown. 
\end{proof}

\section{Properties of the operator equation}
\label{sec-popeq}

Operator equation~(\ref{fredinteq}), which follows from Theorem~\ref{theo:tv2}, can be written as follows 
\begin{equation}\label{opeqA}
  \mathcal{A}(\mathcal{R}_D\,\varphi) = f\,,
\end{equation}
where $f := F_1 [\phi_T|_{\Omega},\beta]$ and the operator $\mathcal{A}:\mathcal{G}_D\to L^2(\R^3)$ is defined by 
\begin{equation}\label{defA}
    \mathcal{A} := (\mbox{Id} + \tau_1^2\,c_0^2\,\Delta)^2 + c_0^2\,\Delta\, \J_T  \,.
\end{equation} 
Here $\mathcal{G}_{D}$ and $\mathcal{G}_{-D}$ are defined as in~(\ref{invgs1}) and~(\ref{invgs2}), respectively.   
Because the kernel of the operator $\J_T$ defined by~(\ref{defJ}) is an element of $\mathcal{G}_{-D}$ for 
\begin{equation}\label{defd}
   D:=2\,\left(\frac{c_0}{k_c}+\epsilon\right)\,T  \qquad\mbox{with}\qquad\epsilon>0\,
\end{equation}
and $\J_T\,\xi$ is well-defined only if $\xi\in\mathcal{G}_D$ (cf. Section~\ref{sec-opJ}), it is required that 
$\mathcal{R}_D\,\varphi\in \mathcal{G}_D$. But this means nothing else than $\varphi \in  L^2(\Omega)$. Moreover, 
$\Delta^2\,\mathcal{R}_D\,\varphi$ exists and lies in $L^2(\R^3)$. Hence we get the following proposition. 

\begin{prop}
A necessary condition for the solvability of operator equation~(\ref{opeqA}) is that 
$$
     \varphi \in  L^2(\Omega) \qquad\mbox{with $D$ defined as in~(\ref{defd}).}
$$
\end{prop}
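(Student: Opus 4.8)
The plan is to trace through exactly which mapping properties are forced if the equation $\mathcal{A}(\mathcal{R}_D\varphi)=f$ is to make sense as an equation between elements of $L^2(\R^3)$, and then to observe that these are precisely captured by the membership $\varphi\in L^2(\Omega)$ together with the definition~(\ref{defd}) of $D$. The operator $\mathcal{A}$ in~(\ref{defA}) has two summands, and I would examine each separately. The term $c_0^2\,\Delta\,\J_T$ only acts on arguments lying in $\mathcal{G}_D$, because the convolution~(\ref{defconvJ}) defining $\J_T$ was declared meaningful only for $\phi\in\mathcal{G}_D$ (its kernel $J$ lies in $\mathcal{G}_{-D}$ with $D$ as in~(\ref{defd}), by Proposition~\ref{prop:uT}). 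Hence $\mathcal{R}_D\varphi\in\mathcal{G}_D$ is a prerequisite for even writing down the left-hand side.

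Next I would invoke the remark made just after~(\ref{defRd}): for $\varphi\in L^2(\Omega)$ one has $\mathcal{R}_D\varphi\in\mathcal{G}_D$ with $(\mathcal{R}_D\varphi)_*=\varphi$, and conversely, by the definition~(\ref{invgs1}) of $\mathcal{G}_D$, any element of $\mathcal{G}_D$ is of the form $\phi_*\ast_\x g_D$ with $\phi_*\in L^2(\R^3)$; restricting attention to the setting of~(AD~1), where $\supp(\varphi)\subseteq\Omega$, this says exactly that $\mathcal{R}_D\varphi\in\mathcal{G}_D$ is equivalent to $\varphi\in L^2(\Omega)$. This handles the $\J_T$ summand. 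For the other summand I would check that $(\mbox{Id}+\tau_1^2c_0^2\Delta)^2\mathcal{R}_D\varphi\in L^2(\R^3)$: on the Fourier side this multiplies $\hat\varphi(\k)$ by the polynomially growing factor $(1-\tau_1^2c_0^2|\k|^2)^2$ and by $\hat g_D(\k)=(2\pi)^{-3/2}e^{-D|\k|^2}$, and since the Gaussian decay dominates any polynomial growth, $\Delta^2\mathcal{R}_D\varphi$ and hence the whole expression lies in $L^2(\R^3)$ whenever $\varphi\in L^2(\Omega)\subset L^2(\R^3)$. Thus both summands of $\mathcal{A}(\mathcal{R}_D\varphi)$ are well-defined $L^2$ functions precisely under the stated condition, and since $f=F_1[\phi_T|_\Omega,\beta]\in L^2(\R^3)$ by~(\ref{defimagf}), solvability of~(\ref{opeqA}) requires $\varphi\in L^2(\Omega)$ with $D$ as in~(\ref{defd}).

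The argument is essentially a bookkeeping exercise in domains, so there is no deep obstacle; the one point that needs care is the direction of the equivalence $\mathcal{R}_D\varphi\in\mathcal{G}_D\Leftrightarrow\varphi\in L^2(\Omega)$. The forward implication (from $\varphi\in L^2(\Omega)$) is immediate from the definitions, but the reverse — extracting from membership in $\mathcal{G}_D$ that the deconvolved object not only lies in $L^2(\R^3)$ but is actually supported in $\Omega$ — uses that we are inside the hypotheses of~(AD~1), where $\phi_T$ itself was built from a $\varphi\in L^2(\Omega)$. In other words, the Proposition is a statement about \emph{this} operator equation with \emph{this} right-hand side, and the support constraint is inherited from the construction of the data rather than being recoverable from $\mathcal{G}_D$ alone; I would state this explicitly to avoid the appearance of circularity. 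Everything else is the routine Fourier estimate that Gaussian decay beats polynomial growth, which I would not spell out in detail.
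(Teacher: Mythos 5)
Your proposal is correct and follows essentially the same route as the paper, whose argument is exactly the domain bookkeeping you describe: $\J_T\,\xi$ is only defined for $\xi\in\mathcal{G}_D$ with $D$ as in~(\ref{defd}) because its kernel lies in $\mathcal{G}_{-D}$, so solvability forces $\mathcal{R}_D\,\varphi\in\mathcal{G}_D$, i.e.\ $\varphi\in L^2(\Omega)$, while $(\mbox{Id}+\tau_1^2\,c_0^2\,\Delta)^2\,\mathcal{R}_D\,\varphi$ is automatically in $L^2(\R^3)$ by the Gaussian smoothing. Your added caution about the support direction of the equivalence $\mathcal{R}_D\,\varphi\in\mathcal{G}_D\Leftrightarrow\varphi\in L^2(\Omega)$ is a fair sharpening of the paper's terser remark, but it does not change the argument.
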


The next two propositions discuss the injectivity and surjectivity of the operator $\mathcal{A}$. 

\begin{prop}
If (AD~1) holds, then $\mathcal{A} \circ \mathcal{R}_D:L^2(\Omega)\to L^2(\R^3)$ is injective. 
\end{prop}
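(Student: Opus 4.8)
The plan is to show that if $\mathcal{A}(\mathcal{R}_D\,\varphi)=0$ for some $\varphi\in L^2(\Omega)$, then $\varphi=0$. Since everything is diagonalized by the spatial Fourier transform, I would pass to the wave-vector domain and exploit the explicit multiplier computed in the proof of Theorem~\ref{theo:tv2}. Recall that for $\xi=\mathcal{R}_D\,\varphi\in\mathcal{G}_D$ one has $\widehat{\J_T\,\xi}(\k)=(2\pi)^{-3/2}\,\frac{\sin^2(\vartheta(k)\,T)}{\vartheta^2(k)}\,(2\pi)^{3/2}\,\hat\xi(\k)$ suitably interpreted via~(\ref{defconvJ}), and that $\Delta$ acts as multiplication by $-k^2$. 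Hence $\widehat{\mathcal{A}\xi}(\k)=m(k)\,\hat\xi(\k)$ with the symbol
\begin{equation*}
   m(k) = \bigl(1-\tau_1^2\,c_0^2\,k^2\bigr)^2 - c_0^2\,k^2\,\frac{\sin^2(\vartheta(k)\,T)}{\vartheta^2(k)}\,.
\end{equation*}
From the simplification already carried out in Theorem~\ref{theo:tv2} (using $\mu^2+\vartheta^2=c_0^2k^2$ and $\cos^2=1-\sin^2$) this is exactly $(2\pi)^3\,[A_-A_+]_{t=T}=e^{-\mu T}e^{\mu T}(2\pi)^3A_-A_+$, i.e. $m(k)$ equals $(2\pi)^{3}$ times the product $\frac{\partial\hat G_-}{\partial t}\frac{\partial\hat G_+}{\partial t}$ at $t=T$, picked up from Lemma~\ref{lemm:tva} and Lemma~\ref{lemm:tvb}.

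The key point is that $m(k)\neq 0$ for a.e. $k\in(0,\infty)$. For $0<k\le k_c$ write $\vartheta(k)=c_0k\sqrt{1-k^2/k_c^2}\in[0,c_0k]$, so $\frac{\sin^2(\vartheta T)}{\vartheta^2}\le T^2$ and $\mu^2(k)=c_0^2k^4/k_c^2\le c_0^2k^2$; the cleanest argument, though, is to use the factorization $m(k)=(2\pi)^3\,A_-(k,T)\,A_+(k,T)$, where (from the display in the proof of Theorem~\ref{theo:tv2})
\begin{equation*}
   (2\pi)^{3/2}e^{\pm\mu T}A_\pm = \pm\bigl[-\mu+\tau_1\mu^2-\tau_1\vartheta^2\bigr]\frac{\sin(\vartheta T)}{\vartheta} + \bigl[1-2\tau_1\mu\bigr]\cos(\vartheta T)\,.
\end{equation*}
Each factor $A_\pm(k,T)$ is, up to the never-vanishing exponential $e^{\mp\mu T}$, the value at time $T$ of the forward/backward thermo-viscous Green multiplier, which is the solution of a second-order ODE in $t$ with the given characteristic roots $\lambda_{1,2}=-\mu\pm i\vartheta$; such a solution (a nonzero linear combination of two distinct real-part exponentials, or of $e^{-\mu t}$ and $te^{-\mu t}$ in the degenerate case) has only isolated zeros in $t$, hence for each fixed $T$ it vanishes only on a measure-zero set of $k$. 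For $k>k_c$ one replaces $\sin,\cos$ by $\sinh,\cosh$ as in the proof of Lemma~\ref{lemm:tv2}; there $A_+$ grows like $e^{(\mu+\vartheta_0)T}$ and in particular is strictly positive for all large $k$, so again the zero set is negligible. Therefore $m(k)\hat\xi(\k)=0$ a.e. forces $\hat\xi(\k)=0$ a.e., so $\xi=\mathcal{R}_D\,\varphi=0$; since $\widehat{\mathcal{R}_D\varphi}=(2\pi)^{3/2}\hat g_D\,\hat\varphi$ and $\hat g_D$ never vanishes, $\hat\varphi=0$ and thus $\varphi=0$. This proves $\mathcal{A}\circ\mathcal{R}_D$ is injective on $L^2(\Omega)$.

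The main obstacle is the rigorous bookkeeping at $k=k_c$ and for $k>k_c$: one must check that the symbol $m(k)$ extends continuously (indeed analytically in $k$) across $k_c$, since $\vartheta$ changes from real to imaginary there, and that the exponential blow-up of $A_+$ for large $k$ is tamed against the Gaussian in the $\mathcal{G}_{-D}/\mathcal{G}_D$ pairing exactly as in Lemma~\ref{lemm:tv2} — this is what makes $\mathcal{A}\xi$ a well-defined $L^2$ function in the first place. A secondary subtlety is the degenerate case $1-\tau_1 c_0^2 k^2=0$ at the single radius where $\mu=1/\tau_1$-type coincidences could occur; but an isolated $k$ is harmless for an a.e. statement, so no case analysis beyond "$m$ is real-analytic and not identically zero on each of the intervals $(0,k_c)$ and $(k_c,\infty)$, hence has only isolated zeros there'' is actually needed. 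Once that structural fact is in hand, the injectivity is immediate.
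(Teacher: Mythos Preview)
Your argument is correct, but it proceeds differently from the paper's. The paper exploits the compact support of $\varphi$ via the Paley--Wiener theorem: if $\varphi\not\equiv 0$ in $L^2(\Omega)$, then $\hat\varphi$ is entire and hence its zero set $Z$ cannot contain an open ball; from $m(|\k|)\,e^{-4D|\k|^2}\,\hat\varphi(\k)=0$ one then forces the identity $(1-4k^2/k_c^2)^2(1-k^2/k_c^2)=\sin^2(\vartheta(k)T)$ to hold on an open set, and checks by hand (oscillation for $k\le k_c$, exponential growth for $k>k_c$) that this is impossible.

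You instead argue purely on the symbol side: $m(k)$ is real-analytic in $k$ (indeed entire, since $\sin^2(\vartheta T)/\vartheta^2$ is an entire function of $\vartheta^2$, which is polynomial in $k$) and not identically zero (e.g.\ $m(0)=1$), so its zero set in $(0,\infty)$ is discrete, and the corresponding set in $\R^3$ (a union of spheres) has Lebesgue measure zero; thus $m(|\k|)\hat\xi(\k)=0$ a.e.\ forces $\hat\xi=0$ a.e. This bypasses Paley--Wiener entirely and does not use the compact support of $\varphi$, so it is both more elementary and slightly more general. One comment: your detour through the factorization $m=(2\pi)^3A_-A_+$ and the ODE-in-$t$ remark is not needed and is a bit muddled (isolated zeros in $t$ do not directly give isolated zeros in $k$); the single sentence ``$m$ is real-analytic on $(0,\infty)$ and $m(0)=1$'' already does all the work, and your final paragraph essentially says this. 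The analytic continuation across $k=k_c$ that you flag as an obstacle is automatic once you observe that $m$ depends on $\vartheta$ only through $\vartheta^2$.
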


\begin{proof}
We show that the null space $\mathcal{N}(\mathcal{A} \circ \mathcal{R}_D)$ contains only the 
zero function. We note that $g_D$ defined as in~(\ref{gauss}) satisfies $\hat g_D(\x) := (2\,\pi)^{-3/2}\,e^{-D\,k^2}$ for  
$k=|\k|$ with $\k\in\R^3$  and that $k_c:= \frac{2}{\tau_1\,c_0}$.
Because the Fourier transform is an isometry on $L^2(\R^3)$, it follows that $\varphi\in \mathcal{N}(\mathcal{A} \circ \mathcal{R}_D)$ 
if and only if 
\begin{equation}\label{helpfred}
\begin{aligned}
   \varphi \in L^2(\Omega)  
              \quad\mbox{and}\quad 
   \left[\left(1 - 4\,\frac{k^2}{k_c^2}\right)^2 
                    - c_0^2\,k^2\, \hat J^2(\k) \right]\,e^{-4\,D\,|\k|^2}\,\hat\varphi = 0\, 
\end{aligned}
\end{equation}
with 
$$
    \hat J(\k)
        := \frac{\sin(\vartheta(\k)\,T)}{\vartheta(\k)} 
        \qquad\mbox{for}\qquad  \k\in\R^3\,. 
$$
We assume that $\varphi$ is not the zero function and prove a contradiction.  
Because $\varphi$ has compact support, the Paley-Wiener Theorem (cf.~\cite{Hoe03}) implies that the set $Z$ of zeros of 
$\hat \varphi$ cannot contain an open ball $B\subset\R^3$ and thus 
\begin{equation*}
   \left(1 - 4\,\frac{k^2}{k_c^2}\right)^2
                    - c_0^2\,k^2\, \frac{\sin^2(\vartheta(k)\,T)}{\vartheta(k)^2} = 0 \qquad\mbox{for}\qquad \k\in B\backslash Z\,
\end{equation*}
follows. But this is equivalently to
\begin{equation}\label{identity}
   \left(1 - 4\,\frac{k^2}{k_c^2}\right)^2\,\left(1 - \frac{k^2}{k_c^2}\right)
            = \sin^2(\vartheta(k)\,T)   \qquad\mbox{for}\qquad \k\in B\backslash Z\,.
\end{equation}
We recall that $\sin^2(\vartheta(k)\,T)$ (cf. Lemma~\ref{lemm:tv2}) oscillates between $\pm 1$ for 
$|k|\leq k_c$ and increases exponentially for $|k|> k_c$. Hence~(\ref{identity}) has finite many zeros on $[0,k_c]$ but 
no zeros on $(k_c,\infty)$, consequently identity~(\ref{identity}) is not true for $\k\in B\backslash Z$, 
where $B$ is an open ball in $\R^3$. Hence the assumption $\varphi\not=0$ is false, which proves the claim. 
\end{proof}

\begin{prop}
If (AD~1) holds, then $\mathcal{A}:\mathcal{G}_D\to L^2(\R^3)$ is not surjective. 
\end{prop}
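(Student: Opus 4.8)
The plan is to show that the range of $\mathcal{A}$ is a proper subset of $L^2(\R^3)$ by exhibiting an explicit obstruction coming from the high–frequency behaviour of the symbol of $\mathcal{A}$. Working on the Fourier side, recall that for $\varphi\in L^2(\Omega)$ and $\phi=\mathcal{R}_D\,\varphi\in\mathcal{G}_D$ we have, by~(\ref{defA}), Proposition~\ref{prop:uT} and~(\ref{defconvJ}),
\begin{equation*}
   \widehat{\mathcal{A}\phi}(\k)
     = \left[\left(1-\tau_1^2\,c_0^2\,k^2\right)^2
             - c_0^2\,k^2\,\frac{\sin^2(\vartheta(k)\,T)}{\vartheta(k)^2}\right]\hat\phi(\k)
     = m(k)\,\hat\varphi(\k)\,e^{-D\,k^2}\,,
\end{equation*}
where $m(k):=\left(1-4\,k^2/k_c^2\right)^2-c_0^2\,k^2\,\sin^2(\vartheta(k)\,T)/\vartheta(k)^2$ and $k=|\k|$. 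The decisive point, already recorded in the injectivity proof, is that for $k>k_c$ the function $\sin^2(\vartheta(k)\,T)/\vartheta(k)^2=\sinh^2(\vartheta_0(k)\,T)/\vartheta_0(k)^2$ grows exponentially, while $D$ was fixed in~(\ref{defd}) only to make $\hat J^*$ lie in $L^2$; it does \emph{not} tame the full symbol $m(k)\,e^{-D\,k^2}$ appearing in front of $\hat\varphi$ once one divides back. I will turn this into a non-surjectivity statement.

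First I would pick a concrete target $f\in L^2(\R^3)$ for which the equation $\mathcal{A}\phi=f$ forces $\hat\varphi$ to be too large. Choose $\hat f$ smooth, radial, supported in a shell $\{\,k_c<k_1\le|\k|\le k_2\,\}$ on which $m(k)$ is bounded away from $0$ (this is possible: on $(k_c,\infty)$ the term $c_0^2k^2\sin^2(\vartheta T)/\vartheta^2$ dominates, so $m(k)<0$ there and $|m(k)|\to\infty$). If $\mathcal{A}\phi=f$ had a solution with $\phi=\mathcal{R}_D\varphi$, $\varphi\in L^2(\Omega)$, then on that shell $\hat\varphi(\k)=\hat f(\k)\,e^{D\,k^2}/m(k)$, and since $\varphi$ has compact support its Fourier transform is entire of exponential type, hence real-analytic; the formula on the shell then extends $\hat\varphi$, contradicting membership in $L^2$ unless $\hat f/m$ decays fast enough to beat $e^{D k^2}$ — which, by the choice of $f$, it does not. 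A cleaner route, avoiding analytic continuation, is: even granting such a $\varphi$, one checks $\|\varphi\|_{L^2}^2 \ge 4\pi\int_{k_1}^{k_2}|\hat f(k)|^2 e^{2D k^2}/m(k)^2\,k^2\,dk$, which is finite; that alone is not a contradiction, so I would instead let $f$ range over a sequence with mass escaping to $k\to\infty$ and show the corresponding $\varphi_n$ cannot stay in a bounded subset of $L^2(\Omega)$ while $\|f_n\|$ stays bounded — i.e. $\mathcal{A}\circ\mathcal{R}_D$ is not bounded below, and since $\mathcal{R}_D(L^2(\Omega))$ is dense-but-not-closed in $\mathcal{G}_D$ with the $L^2$ norm, the range is not closed, hence not all of $L^2(\R^3)$.

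The key steps, in order, are: (i) write the Fourier symbol of $\mathcal{A}$ explicitly and isolate the factor $e^{-D k^2}$ coming from $\mathcal{R}_D$; (ii) establish the high-frequency asymptotics $m(k)\sim -c_0^2 k^2\sinh^2(\vartheta_0(k)T)/\vartheta_0(k)^2$ and note $D k^2 = 2(c_0/k_c+\epsilon)T k^2$ grows strictly faster than $2\vartheta_0(k)T\sim c_0 k_c T + 2\epsilon T k^2$ only by the $\epsilon$-margin — so $m(k)e^{-D k^2}\to 0$, meaning the symbol of $\mathcal{A}$ vanishes at infinity and the inverse symbol blows up; (iii) conclude that $\mathcal{A}$, being (after the isometry of the Fourier transform) multiplication by a symbol that tends to $0$, cannot be surjective onto $L^2$ — pick any $\hat f$ whose ratio by the symbol is not square-integrable, e.g. $\hat f$ with slow polynomial decay concentrated where the symbol is smallest.

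The main obstacle I anticipate is bookkeeping the precise rate in step (ii): one must verify that $m(k)e^{-D k^2}$ really does go to $0$ (equivalently that $D k^2$ eventually exceeds $2\vartheta_0(k)T + \log k$), which hinges on the strict positivity of $\epsilon$ in~(\ref{defd}); and then, having a symbol that vanishes at infinity rather than one that is merely unbounded-below, one must argue surjectivity fails — this is immediate since a multiplication operator by $m(k)e^{-D k^2}$ on $L^2$ has range $\{g : g/(m e^{-D\cdot})\in L^2\}\subsetneq L^2$, e.g. it misses every $g$ with $g(\k)=(1+|\k|)^{-2}$ times an indicator of a neighbourhood of infinity. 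Packaging this with the domain $\mathcal{G}_D$ (so that the honest statement is about $\mathcal{A}:\mathcal{G}_D\to L^2$, with $\mathcal{G}_D$ carrying the $L^2$ norm) completes the argument.
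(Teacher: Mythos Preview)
Your final argument is correct but takes a genuinely different route from the paper. You establish non-surjectivity via the \emph{large-$k$ decay} of the effective symbol: writing $\phi=\mathcal{R}_D\phi_*$ with $\phi_*\in L^2(\R^3)$, the map $\phi_*\mapsto\mathcal{A}\phi$ is (after Fourier transform) multiplication by $M(k)=m(k)\,e^{-Dk^2}$, and your asymptotic computation (using the $\epsilon$-margin in~(\ref{defd})) shows $M(k)\to 0$; a multiplication operator by a continuous symbol that tends to zero misses, e.g., any $f$ with merely polynomial Fourier decay. The paper instead exploits a \emph{local} obstruction: the symbol $\hat h(k)=m(k)$ has (finitely many) zeros on spheres in $\k$-space, so for $\hat\varphi=\hat f/\hat h$ to lie even in $L^2$ --- let alone in $\mathcal{G}_D$ --- the right-hand side $\hat f$ must vanish to matching order on each such sphere, which is a genuine restriction on $f\in L^2(\R^3)$. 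The paper's argument is shorter and avoids the high-frequency asymptotics and the $\epsilon>0$ hypothesis; your argument would survive even if $m$ happened to be zero-free. Two minor remarks on your write-up: the bijection $\mathcal{G}_D\ni\phi\leftrightarrow\phi_*\in L^2(\R^3)$ is \emph{not} an isometry when $\mathcal{G}_D$ carries the $L^2$-norm (only a linear bijection), though this is irrelevant for surjectivity; and the detours through Paley--Wiener and ``not bounded below'' are unnecessary once you have $M(k)\to 0$ --- you can go directly to step~(iii).
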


\begin{proof}
Let $f\in L^2(\R^3)$ and $\varphi$ solve $\mathcal{A}(\varphi) = f$. Then from~(\ref{defA}), 
we infer for $k:=|\k|$ with $\k\in\R^3$ that 
$$
   \hat\varphi(\k) = \frac{\hat f(\k)}{\hat h(\k)} 
        \qquad\mbox{with}\qquad
   \hat h(\k) := (1 - \tau_1^2\,c_0^2\,k^2)^2 + c_0^2\,k^2\,\hat J^2(\k)\,,
$$
where $\hat h(\k)$ has zeros lying on a finite (but large) number of spheres. Hence $\hat\varphi(\k)\in L^2(\R^3)$ implies that a zero 
$\k_j$ of $\hat h$ of order $n_j$ is a zero of $\hat f$ of order $\geq n_j$. Because this is a real restriction on the 
space $L^2(\R^3)$, $\mathcal{A}$ cannot be surjective.  
\end{proof}

\section{Properties of the imaging functional}
\label{sec-propim}

Now we show under the assumption~(\ref{assT}) that 
\begin{equation}\label{relF1}
       F_1[\phi_T|_{\Omega},\beta]|_{\Omega} = \mathcal{R}_D\varphi|_{\Omega}  \qquad\mbox{for}\qquad 
       \tau_1=0\,
\end{equation}
and
\begin{equation}\label{relF2}
       \lim_{\tau_1\to 0} F_1[\phi_T|_{\Omega},\beta]|_{\Omega} = \mathcal{R}_D\varphi|_{\Omega}  \,
\end{equation}
hold for $D>0$, where $\mathcal{R}_D$ is defined as in~(\ref{defRd}). We recall that $\phi_T := p|_{t=T}$ and 
$\beta := p|_{\partial \Omega}$, where $p$ denotes the solution of~(\ref{waveeqp}) with complex attenuation law~(\ref{Modeltv}) 
and $\varphi\in L^2(\Omega)$.   Loosely speaking identity~(\ref{relF1}) means that $F_1[\phi_T|_{\Omega},\beta]_\Omega$ is 
similar to $\mathcal{R}_D \,\varphi$ if the attenuation is weak.

\begin{theo}\label{theo:relF1}
Let $\varphi\in L^2(\Omega)$. If $D>0$ and assumption~(\ref{assT}) holds, then identity~(\ref{relF1}) is true. 
\end{theo}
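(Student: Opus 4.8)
The plan is to show that at $\tau_1=0$ the imaging functional collapses to $\mathcal{R}_D\,\varphi$ plus a single outgoing free wave, and then to kill that extra wave on $\Omega$ by the sharp Huygens principle, the relevant propagation radius being exactly the $2\,c_0T$ appearing in~(\ref{assT}). I would first put $\tau_1=\tau_2=0$. Since the data $(\phi_T|_\Omega,\beta)$ come from a genuine solution $p^{tv}$ of~(\ref{waveeqp}) with $\alpha_*^{tv}$, we have $\Phi_T(\phi_T|_\Omega,\beta)=p^{tv}|_{t=T}=\phi_T$ and hence $F_1[\phi_T|_\Omega,\beta]=2\,F[\mathcal{R}_D\,\phi_T]$. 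Theorem~\ref{theo:tv2} (case $\tau_2=0$; at $\tau_1=0$ the constraint~(\ref{defs2}) holds for every $D>0$ since $k_c=\infty$) then gives
\begin{equation*}
   F_1[\phi_T|_\Omega,\beta]=2\,\mathcal{R}_D\,\varphi+2\,c_0^2\,\Delta\,\J_T\,\mathcal{R}_D\,\varphi\qquad\text{on }\R^3\,,
\end{equation*}
so the whole task is to understand the correction $2\,c_0^2\,\Delta\,\J_T\,\mathcal{R}_D\,\varphi$.

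At $\tau_1=0$ one has $k_c=\infty$ and $\vartheta(\k)=c_0|\k|$, so by Proposition~\ref{prop:uT} (equivalently, by~(\ref{waveequ}) with $\tau_1=0$) the operator $\J_T$ acts through the multiplier $\sin^2(c_0|\k|T)/(c_0^2|\k|^2)$; consequently $c_0^2\,\Delta\,\J_T$ acts through $-\sin^2(c_0|\k|T)$ and, with $c_1:=2\,c_0$,
\begin{equation*}
   \widehat{F_1}(\k)=\big(2-2\sin^2(c_0|\k|T)\big)\,\widehat{\mathcal{R}_D\varphi}(\k)=\big(1+\cos(c_1|\k|T)\big)\,\widehat{\mathcal{R}_D\varphi}(\k)\,.
\end{equation*}
Transforming back,
\begin{equation*}
   F_1[\phi_T|_\Omega,\beta]=\mathcal{R}_D\,\varphi+\cos\!\big(c_1T\sqrt{-\Delta}\big)\,\mathcal{R}_D\,\varphi\,,
\end{equation*}
and the second term is precisely the value at time $T$ of the solution $v$ of the homogeneous wave equation $\Delta v-c_1^{-2}\,\partial_t^2v=0$ in $\R^3$, with Cauchy data $\big(\mathcal{R}_D\,\varphi,\,0\big)$ and wave speed $c_1=2\,c_0$.

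Finally, the heart of the matter: showing $v(\cdot,T)|_\Omega=0$. Since $\mathcal{R}_D$ is a radial Fourier multiplier it commutes with the wave propagator and with spherical averaging, so for $\x\in\Omega$ I would write $v(\x,T)$ as $\mathcal{R}_D$ applied to $\partial_T\big(T\,M_{c_1T}[\varphi]\big)(\x)$, where $M_r[\varphi](\x)$ is the mean of $\varphi$ over the sphere $\{\,\y:|\y-\x|=r\,\}$ — this is Kirchhoff's formula in $\R^3$, where the value at $(\x,T)$ depends \emph{only} on $\varphi$ restricted to the single sphere of radius $c_1T=2\,c_0T$ about $\x$ (strong Huygens, available because the spatial dimension $3$ is odd). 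Assumption~(\ref{assT}) says exactly that for every $\x\in\Omega$ one has $\supp(\varphi)\subseteq B_{2c_0T}(\x)$, so this sphere misses $\supp(\varphi)$, the spherical mean and its $T$-derivative vanish on a neighbourhood of $\Omega$, and therefore $v(\cdot,T)|_\Omega=0$, which yields $F_1[\phi_T|_\Omega,\beta]|_\Omega=\mathcal{R}_D\,\varphi|_\Omega$. I expect this last step to be the main obstacle: one must invoke the \emph{sharp} Huygens principle (finite speed of propagation alone would not let the radius $2\,c_0T$ suffice) and one must carefully push the regularization $\mathcal{R}_D$ through the propagator, so that the support hypothesis is used for $\varphi$ and not for $\mathcal{R}_D\,\varphi$.
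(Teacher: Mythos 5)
Your argument is correct and reaches the same conclusion, but by a genuinely different route from the paper's. The paper represents $\J_T\varphi$ as $w(\cdot,T)$, where $w$ solves~(\ref{waveequ}) at $\tau_1=0$ with the \emph{static} source $2\varphi/c_1^2$; the retarded-potential formula gives $w(\x,T)=\frac{2}{c_1^2}\int \chi_{B_{c_1T}(\x)}(\y)\,\frac{\varphi(\y)}{4\pi|\x-\y|}\,\d\y$, assumption~(\ref{assT}) makes the indicator saturate on $\supp(\varphi)$ for $\x$ near $\Omega$, so $w(\cdot,T)$ coincides locally with the Newtonian potential and $c_0^2\Delta\J_T\varphi=-\tfrac12\varphi$ on $\Omega$ via $\Delta|\x|^{-1}=-4\pi\,\delta$. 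You instead fold both terms of $F_1$ into the single multiplier $2-2\sin^2(c_0|\k|T)=1+\cos(c_1|\k|T)$, recognize the correction as the free wave group $\cos\bigl(c_1T\sqrt{-\Delta}\bigr)$, and annihilate it on $\Omega$ by Kirchhoff's formula and sharp Huygens. The two computations are equivalent (saturation of the indicator is exactly the statement that the sphere of radius $c_1T$ has swept past $\supp\varphi$), but your packaging makes transparent why the radius in~(\ref{assT}) is exactly $2c_0T$, why finite propagation speed alone would not do, and why odd spatial dimension is being used --- structure the paper leaves implicit. Two caveats, both shared with the paper's own proof: (i) you need $\supp(\varphi)$ at positive distance from $\partial B_{2c_0T}(\x)$ so that the spherical mean vanishes for all radii near $c_1T$ and hence its $T$-derivative vanishes too; this holds because $\supp\varphi$ is compact and contained in the open ball. (ii) The difficulty you flag at the end is real and is \emph{not} removed by commuting $\mathcal{R}_D$ past the propagator: $\cos\bigl(c_1T\sqrt{-\Delta}\bigr)\varphi$ vanishes on $\Omega$ but not globally, and $g_D$ has unbounded support, so $\mathcal{R}_D\cos\bigl(c_1T\sqrt{-\Delta}\bigr)\varphi$ is only exponentially small on $\Omega$, not identically zero. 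The paper's proof makes the same silent identification (it proves $c_0^2[\Delta\J_T\varphi]_{\Omega}=-\tfrac12\varphi$ and then applies the conclusion to $\mathcal{R}_D\varphi$, which does not satisfy~(\ref{assT})), so this is a defect of the stated identity~(\ref{relF1}) rather than of your argument; you at least name it, whereas the paper does not.
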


\begin{proof}
Let $\tau_1=0$ and $w(\cdot,t):=\J_t\,\varphi$ satisfy~(\ref{waveequ}). Then $w$ solves the standard wave equation
$$
     \Delta\,w 
        - \frac{1}{c_1^2}\,\frac{\partial^2 w}{\partial t^2} 
          = - \frac{2\,\varphi}{c_1^2}  \qquad\mbox{on $\R^3\times [0,T]$}\quad \mbox{with}\quad c_1:=2\,c_0\,.
$$
Consequently, $w$ can be written as 
\begin{equation*}
\begin{aligned}
      w(\x,T) = \frac{2}{c_1^2}\,\int_{\R^3}\int_0^\infty \frac{\delta\left(T-s-\frac{|\x-\y|}{c_1}\right)}
                                                                {4\,\pi\,|\x-\y|}\,\varphi(\y)\,\d s \,\d \y 
              = \frac{2}{c_1^2}\,\int_{\R^3}  \frac{\chi_{B_{c_1\,T}(\x)}(\y)}
                                                     {4\,\pi\,|\x-\y|}\,\varphi(\y)\,\d \y \,,
\end{aligned}
\end{equation*}
where we have used that 
$$
     \int_0^\infty \delta\left(T-s-\frac{|\x-\y|}{c_1}\right) \,\d s = 1 
              \qquad\mbox{if and only if}\qquad
     \y\in B_{c_1\,T}(\x)\,.
$$
Employing assumption~(\ref{assT}), $c_1=2\,c_0$ and $\Delta \frac{1}{|\x-\y|} = -4\,\pi\,\delta(\x-\y)$ to our last result yields
$$
  c_0^2\,\left[\Delta\,\J_T\,\varphi\right]_{\Omega} 
   = c_0^2\,\left[\Delta w(\x,T)\right]_{\Omega}  
     =  \frac{1}{8\,\pi}\,\Delta\,\int_{\supp(\varphi)}  \frac{\varphi(\y)}{|\x-\y|}\,\d \y
     =  -\frac{1}{2}\,\varphi\,, 
$$
which proves our claim. 
\end{proof}

\begin{theo}\label{theo:relF2}
Let $\varphi\in L^2 (\Omega)$. If $D>0$ and assumption~(\ref{assT}) holds, then identity~(\ref{relF2}) is true.
\end{theo}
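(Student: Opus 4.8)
The plan is to prove~(\ref{relF2}) by computing the limit \emph{directly} from the exact representation of Theorem~\ref{theo:tv2}, not by appealing to abstract continuity of $F_1$ in $\tau_1$. Fix $D>0$ and set $\psi:=\mathcal{R}_D\,\varphi$. For $0<\tau_1<D/(c_0^2T)$ the choice $\epsilon:=\tfrac{D}{2T}-\tfrac{\tau_1 c_0^2}{2}>0$ realizes~(\ref{defs2}) with this \emph{fixed} $D$, so Lemma~\ref{lemm:tv2} and hence Theorem~\ref{theo:tv2} apply; using $F_1=2\,F[\mathcal{R}_D\,\phi_T]$, that theorem gives
$$
   \tfrac12\,F_1[\phi_T|_{\Omega},\beta]
     = (\mbox{Id}+\tau_1^2 c_0^2\Delta)^2\,\psi + c_0^2\,\Delta\,\J_T\,\psi\,.
$$
I would pass to the limit $\tau_1\to0$ in the two summands separately. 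The first one converges in $L^2(\R^3)$ since $\psi=g_D*_\x\varphi$ is smooth with $\Delta\psi,\Delta^2\psi\in L^2(\R^3)$ (the factor $|\k|^4 e^{-D|\k|^2}\hat\varphi$ is square integrable), whence
$$
   \big\|(\mbox{Id}+\tau_1^2 c_0^2\Delta)^2\psi-\psi\big\|_{L^2}
     \le 2\tau_1^2 c_0^2\,\|\Delta\psi\|_{L^2}+\tau_1^4 c_0^4\,\|\Delta^2\psi\|_{L^2}\longrightarrow0\,.
$$

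For the second summand I would show $c_0^2\Delta\,\J_T\,\psi\to c_0^2\Delta\,\J_T^{0}\psi$ in $L^2(\R^3)$, where $\J_T^{0}$ denotes $\J_T$ at $\tau_1=0$, i.e.\ the propagator of~(\ref{waveequ}) with $\tau_1=0$. By Proposition~\ref{prop:uT} the Fourier symbol of $c_0^2\Delta\,\J_T$ is $-c_0^2|\k|^2\,\sin^2(\vartheta T)/\vartheta^2$, so on $\hat\psi=e^{-D|\k|^2}\hat\varphi$ the claim is the $L^2$-convergence of
$$
   c_0^2|\k|^2\,\tfrac{\sin^2(\vartheta(\k)T)}{\vartheta^2(\k)}\,e^{-D|\k|^2}\hat\varphi(\k)
     \longrightarrow \sin^2(c_0|\k|T)\,e^{-D|\k|^2}\hat\varphi(\k)\,,
$$
the pointwise limit following from $k_c=2/(\tau_1 c_0)\to\infty$ and $\vartheta(\k)=c_0|\k|\sqrt{1-|\k|^2/k_c^2}\to c_0|\k|$.

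The main obstacle is the $L^2$-domination needed to interchange limit and integral, because the symbol itself \emph{grows} at high frequencies. For $|\k|\le k_c$ one has $\sin^2(\vartheta T)/\vartheta^2\le T^2$; for $|\k|>k_c$ one has $\vartheta=\i\vartheta_0$ with $\vartheta_0=c_0|\k|\sqrt{|\k|^2/k_c^2-1}$ and, via $\sinh y\le y\,e^{y}$, the bound $\sin^2(\vartheta T)/\vartheta^2=\sinh^2(\vartheta_0 T)/\vartheta_0^2\le T^2 e^{2\vartheta_0 T}$. Since $\sqrt{|\k|^2/k_c^2-1}\le|\k|/k_c$ yields $2\vartheta_0 T\le\tau_1 c_0^2 T|\k|^2$, restricting to $0<\tau_1\le D/(2c_0^2T)$ gives $2\vartheta_0 T-D|\k|^2\le-\tfrac12 D|\k|^2$ for \emph{every} $|\k|>k_c$. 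Hence, uniformly in such $\tau_1$, the integrand is dominated by $c_0^2 T^2\,|\k|^2 e^{-D|\k|^2/2}\,|\hat\varphi(\k)|$, which lies in $L^2(\R^3)$ because $\|\hat\varphi\|_\infty\le(2\pi)^{-3/2}\|\varphi\|_{L^1}<\infty$ ($\varphi$ has compact support). Dominated convergence then delivers the $L^2$ limit, and in particular convergence of the restrictions to $\Omega$. This is precisely the step where the fixed Gaussian regularization with $D>0$ is indispensable, and I expect it to be the only genuine difficulty.

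It remains to identify the limiting second term on $\Omega$. Evaluating the representation of Theorem~\ref{theo:tv2} at $\tau_1=0$ gives $\tfrac12 F_1\big|_{\tau_1=0}=\psi+c_0^2\Delta\,\J_T^{0}\psi$, while Theorem~\ref{theo:relF1} asserts $F_1\big|_{\tau_1=0}\big|_\Omega=\mathcal{R}_D\varphi|_\Omega=\psi|_\Omega$; subtracting yields the operator value $c_0^2\Delta\,\J_T^{0}\psi\big|_\Omega=-\tfrac12\psi|_\Omega$ (this is exactly the finite-speed, Newtonian-potential computation of Theorem~\ref{theo:relF1}, in which assumption~(\ref{assT}) provides $\supp(\varphi)\subseteq B_{2c_0T}(\x)$ for every $\x\in\Omega$). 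Combining this with the two limits above,
$$
   \lim_{\tau_1\to0}\tfrac12\,F_1[\phi_T|_{\Omega},\beta]\big|_{\Omega}
     =\mathcal{R}_D\varphi|_{\Omega}-\tfrac12\,\mathcal{R}_D\varphi|_{\Omega}
     =\tfrac12\,\mathcal{R}_D\varphi|_{\Omega}\,,
$$
which is~(\ref{relF2}). In summary, the proof reduces the assertion to a termwise limit of the explicit formula, with the crux being the $\tau_1$-uniform domination of the dissipative high-frequency growth by the fixed Gaussian; Theorem~\ref{theo:relF1} enters only to pin the value of the limiting non-dissipative operator on $\Omega$.
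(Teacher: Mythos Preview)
Your proposal is correct and follows essentially the same overall strategy as the paper: start from the exact representation of Theorem~\ref{theo:tv2}, pass to the limit $\tau_1\to 0$ in each of the two summands separately, and invoke Theorem~\ref{theo:relF1} to identify the limiting value of $c_0^2\,\Delta\,\J_T^{0}\,\mathcal{R}_D\varphi$ on $\Omega$.

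The genuine difference is in how the $L^2$-convergence of the second summand is established. The paper argues in two stages: first it shows $\hat w_{\tau_1}\to\hat w_0$ in $L^2(\R^3)$ via an $\epsilon$-splitting (tail smallness outside a large ball combined with pointwise convergence on the compact ball), and then upgrades to $\Delta w_{\tau_1}\to\Delta w_0$ by noting that $\Delta g_D$ is itself a linear combination of Gaussians. You instead work directly with the full symbol $c_0^2|\k|^2\,\sin^2(\vartheta T)/\vartheta^2\cdot e^{-D|\k|^2}\hat\varphi$ and produce, for all $\tau_1\le D/(2c_0^2T)$, a single explicit dominating function $c_0^2T^2|\k|^2 e^{-D|\k|^2/2}|\hat\varphi(\k)|\in L^2(\R^3)$, so that dominated convergence applies in one stroke. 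Your route is more transparent about where the fixed Gaussian factor is spent and avoids the somewhat delicate uniformity-in-$\tau_1$ needed for the paper's tail estimate; the paper's route, on the other hand, separates the roles of $w$ and $\Delta$ and makes the appearance of wave equation~(\ref{waveequ}) more visible. Both lead to the same identity~(\ref{relF2}).
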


\begin{proof}
Because $\mathcal{R}_D\,\varphi $ is bounded for $\varphi\in L^2(\Omega)$, it follows 
$$
    \lim_{\tau_1\to 0} a(\tau_1)\,\Delta\,\mathcal{R}_D\,\varphi 
      = \Delta\, \lim_{\tau_1\to 0} a(\tau_1)\,\mathcal{R}_D\,\varphi 
      = 0 \qquad \mbox{if}\qquad \lim a(\tau_1)\to 0\,.
$$
Hence we have 
$$
   \lim_{\tau_1\to 0} \left(\mbox{Id} + \tau_1^2\,c_0^2\,\Delta\right)^2\,\mathcal{R}_D\,\varphi 
    = \mathcal{R}_D\,\varphi \qquad\mbox{in}\qquad L^2(\R^3)\,. 
$$
It remains to show that 
$$
    \lim_{\tau_1\to 0} c_0^2\,\left[\Delta\,\J_T\,\mathcal{R}_D\,\varphi\right]_{\Omega} 
            = -\frac{1}{2}\,[\mathcal{R}_D\,\varphi]_{\Omega}  \qquad\mbox{in}\qquad L^2(\R^3)\,.
$$
Due to Theorem~\ref{theo:relF1} and its proof, this is equivalent to 
$$
   \lim_{\tau_1\to 0} c_0^2\,\left[\Delta w_{\tau_1} (\x,T)\right]_{\Omega} 
    =  c_0^2\,\left[\Delta w_0(\x,T)\right]_{\Omega}  
            \qquad\mbox{in}\qquad L^2(\R^3)\,, 
$$
where $w_{\tau_1}$ solves~(\ref{waveequ}) with $\hat \xi:=\hat g_D\,\hat\varphi$, i.e.  
(cf. Proposition~\ref{prop:uT})  
$$
    \hat w_{\tau_1}(\k,T) = \hat g_D(\k)\,\frac{\sin^2(\vartheta(|\k|)\,T)}{\vartheta^2(|\k|)}  \quad\mbox{with}\quad
    \vartheta(|\k|) = c_0\,|\k|\,\left(1-\frac{\tau_1^2\,c_0^2}{4}\,k^2\right) \,.
$$
Notice $\vartheta(|\k|) = c_0\,|\k|$ for $\tau_1=0$. Now we estimate $\|\hat w_{\tau}-\hat w_0\|_{L^2}^2$. 
Let $\epsilon>0$ be arbitrary. Because $\hat w_{\tau_1},\,\hat w_0\in L^2(\R^3)$, there exists $k_\epsilon>0$ such that 
$$
       \int_{\R^3\backslash V_\epsilon} \hat g_D^2(\k)\,|\hat w_{\tau_1}(\k,T)|^2\,\d \k \leq \frac{\epsilon}{8} 
       \qquad\mbox{for}\qquad \tau_1\in (0,\tau_0)\,,
$$
where $\tau_0$ is sufficiently small and $V_\epsilon:=B_{k_\epsilon}(\mathbf{0})$. Because 
$\hat w_{\tau_0}(\k,T)$ converges to $\hat w_0(\k,T)$ pointwise for $\tau_1\to 0$ and $\bar V_\epsilon$ is compact, it follows that 
$$
            |\hat w_{\tau_1}(\cdot,T)-\hat w_0(\cdot,T)|^2 \leq \frac{\epsilon}{2\,|V_\epsilon|} 
               \qquad\mbox{on}\quad \bar V_\epsilon \,\quad \mbox{for sufficiently small $\tau_1$.}
$$ 
With these two inequalities and $|a-b|^2\leq 4\,\max(|a|^2,|b|^2)$ for $a\,b\in\C$, we get
\begin{equation*}
\begin{aligned}
   \|\hat w_{\tau}-\hat w_0\|_{L^2}^2 
      \leq \frac{\epsilon}{2\,|V_\epsilon|} \,\int_{V_\epsilon}  \d \k 
               + 4\,\frac{\epsilon}{8} 
       = \epsilon\,\qquad \mbox{for sufficiently small $\tau_1$.}
\end{aligned}
\end{equation*}
Therefore $w_{\tau}$ converges to $w_0$ in $L^2(\R^3)$. Because $\Delta g_D$ is a linear combination of Gaussians, it follows 
that $\Delta w_{\tau}$ converges to $\Delta w_0$ in $L^2(\R^3)$, as was to be shown. 
\end{proof}

\begin{figure}[!ht]
\begin{center}
\includegraphics[height=4.6cm,angle=0]{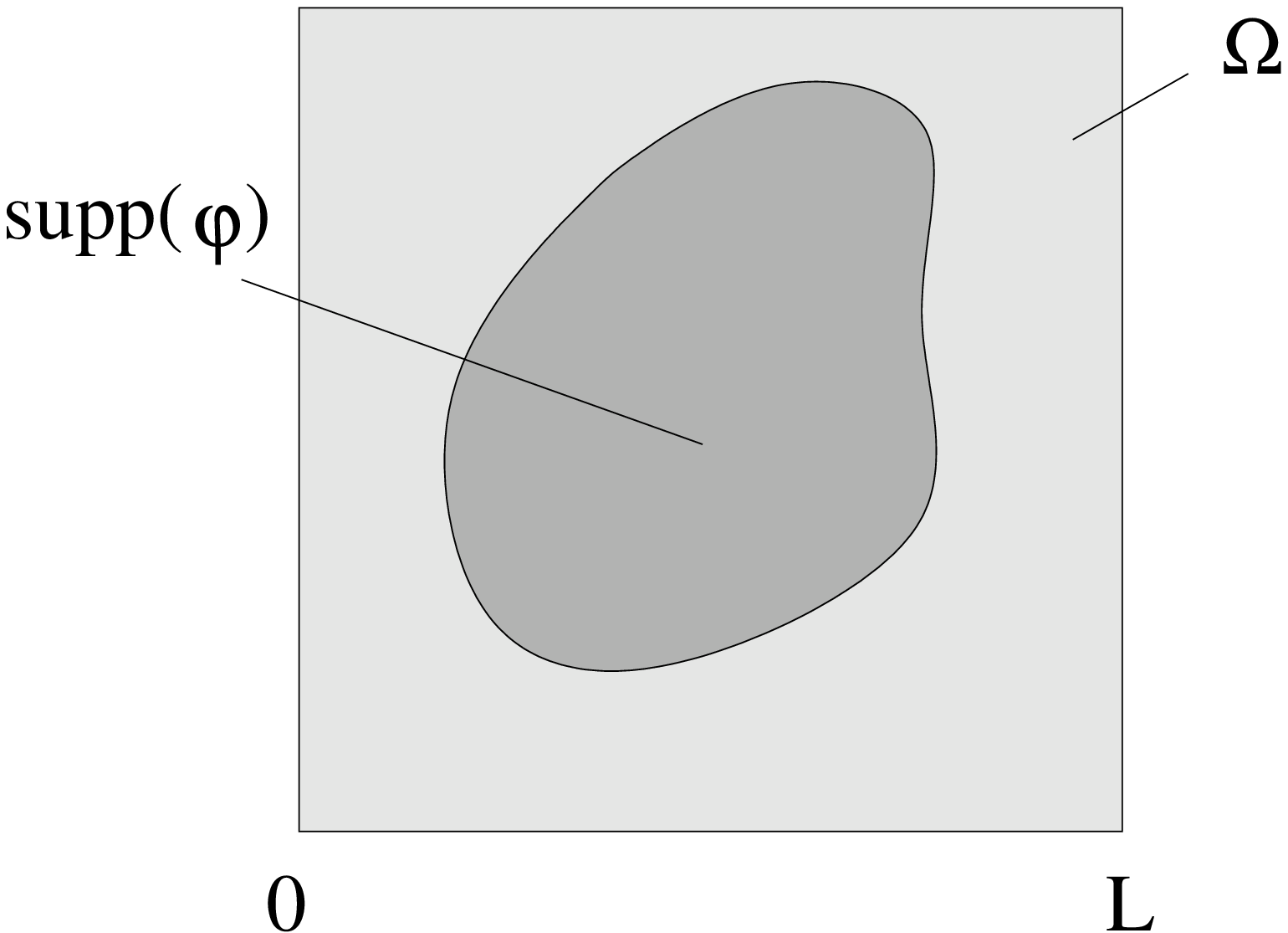}
\includegraphics[height=5.1cm,angle=0]{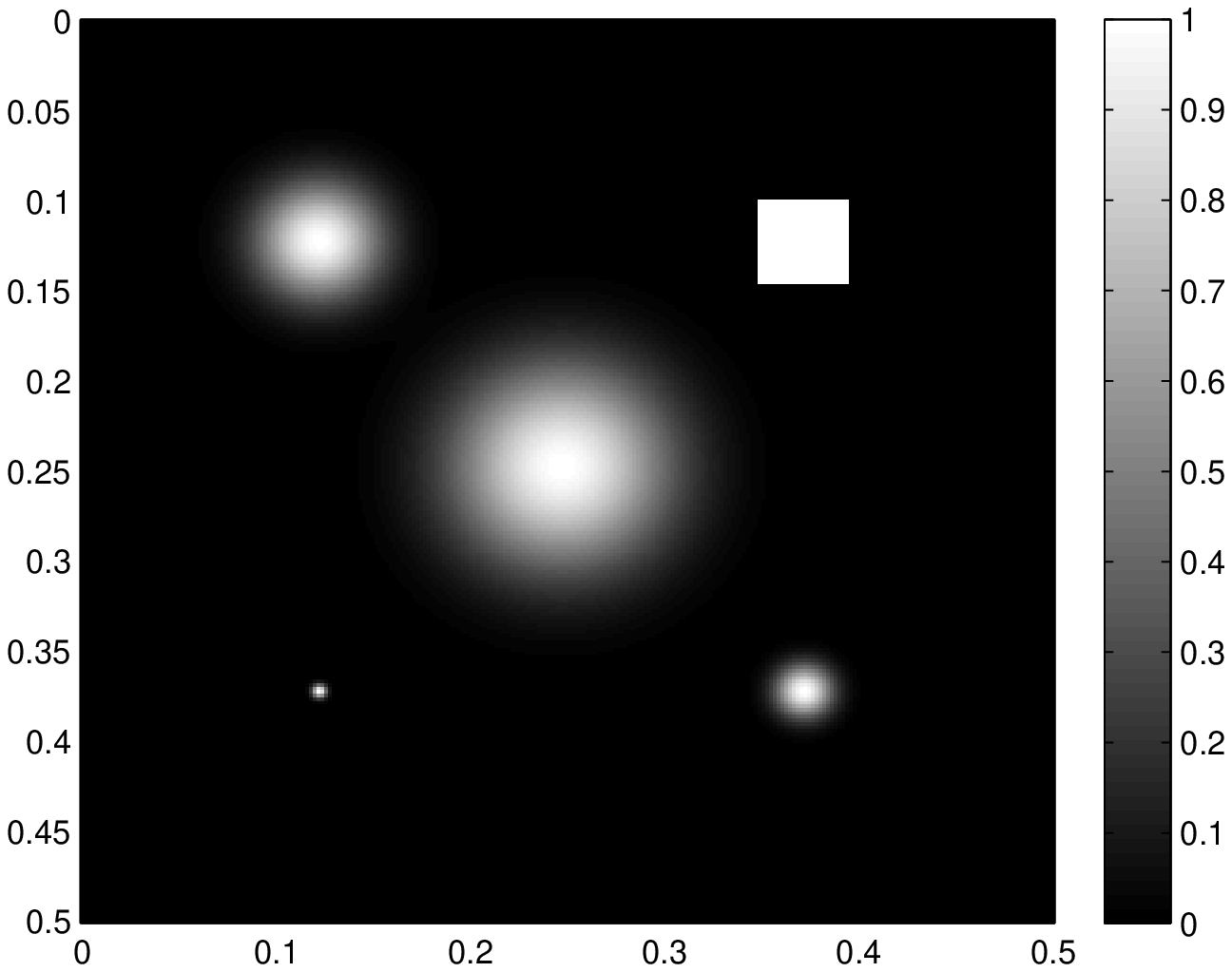} \\
\includegraphics[height=5.1cm,angle=0]{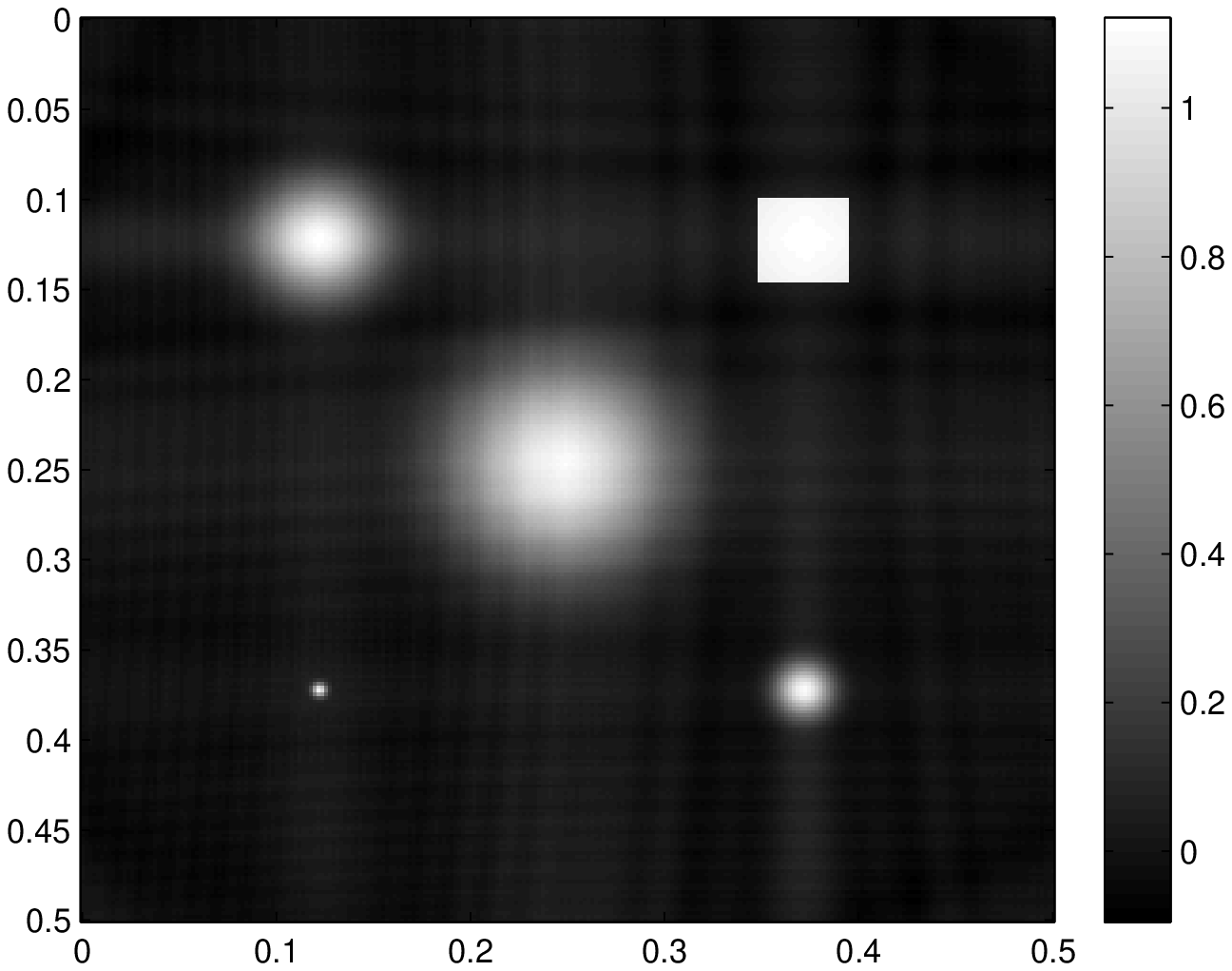}
\includegraphics[height=5.1cm,angle=0]{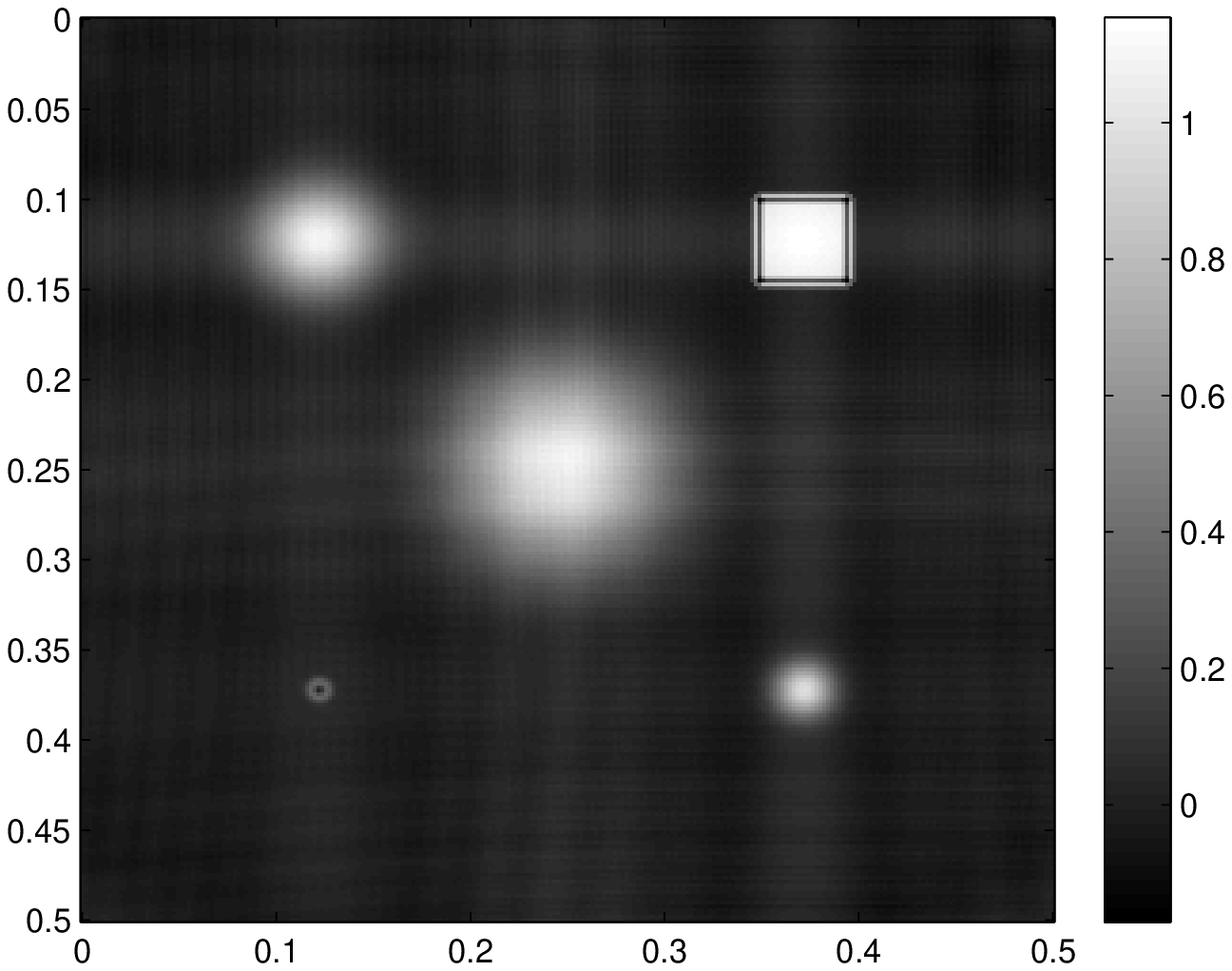}
\end{center}
\caption{The first row shows a visualization of the set-up with $L=0.5\,m$ and an example of the initial pressure function $\varphi$. 
The second row shows the time reversal image $\mathcal{I}$ for $\tau_1=10^{-9}\,s$ (water) and $\tau_1= 8.5\cdot 10^{-7}\,s$ 
(fictive), respectively. Small features are not well mapped by the time reversal image in case of strong dissipation.}
\label{fig:setup}
\end{figure}

\section{Simulation of the imaging functional}
\label{sec-sim}

In Section~\ref{sec-propim} we showed that 
\begin{equation}\label{relF2}
       \lim_{\tau_1\to 0} F_1[\phi_T|_{\Omega},\beta]|_{\Omega} = [\mathcal{R}_D\varphi]_{\Omega}  \,
\end{equation}
holds under an appropriate assumption, which means that the time reversal image (in the noise-free case) gives a good estimation 
of the initial pressure function if $\tau_1$ is sufficiently small. 
The goal of this section is to demonstrate that the parameter $\tau_1\approx 10^{-9}\,s$ for tissue similar to water is 
sufficiently small to ensure strong similarity between the initial pressure function $\varphi$ and the time reversal image
\begin{equation}\label{imageI}
\begin{aligned}
   \mathcal{I} := 2\,\left(\mbox{Id} + \tau_1^2\,c_0^2\,\Delta\right)^2\,\varphi + \Delta\,\mathcal{I}_0 
 \qquad\mbox{with}\qquad
   \mathcal{I}_0 := 2\,c_0^2\,\J_T\,\varphi \,.
\end{aligned}
\end{equation}
In order to save computation time we focus on the two dimensional case for which the results derived in this paper hold, too.

\begin{figure}[!ht]
\begin{center}
\includegraphics[height=5.1cm,angle=0]{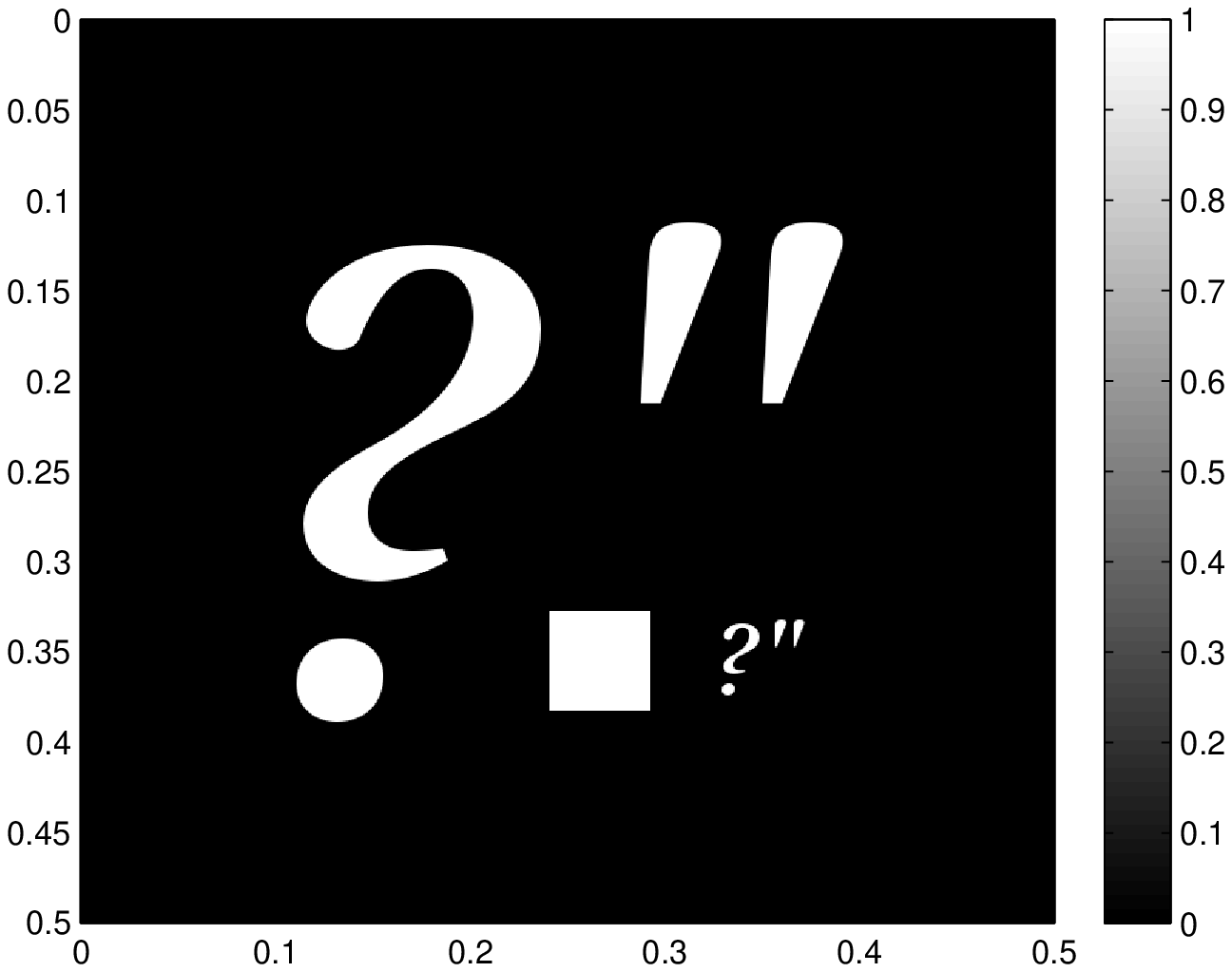} 
\includegraphics[height=5.1cm,angle=0]{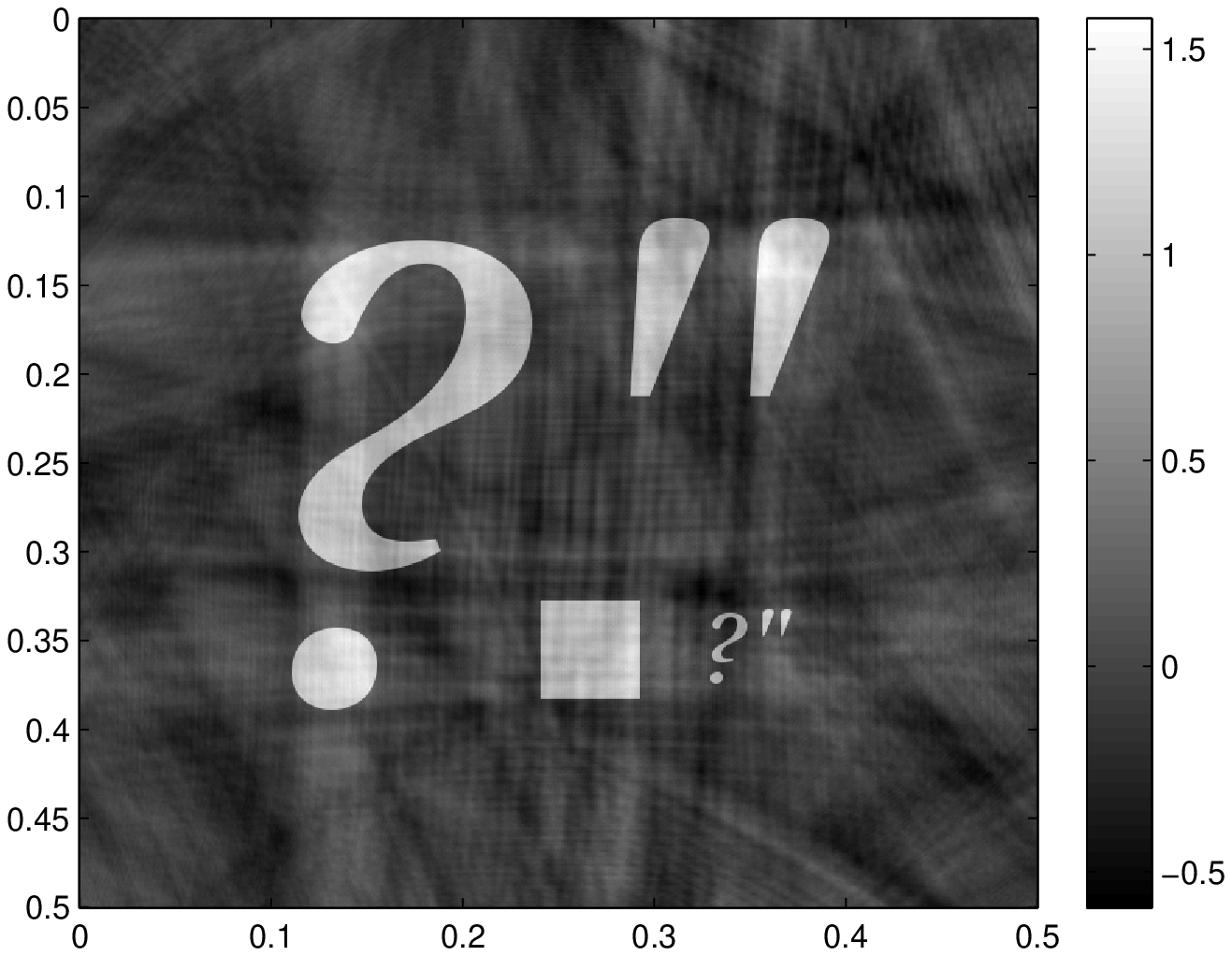} \\
\includegraphics[height=5.1cm,angle=0]{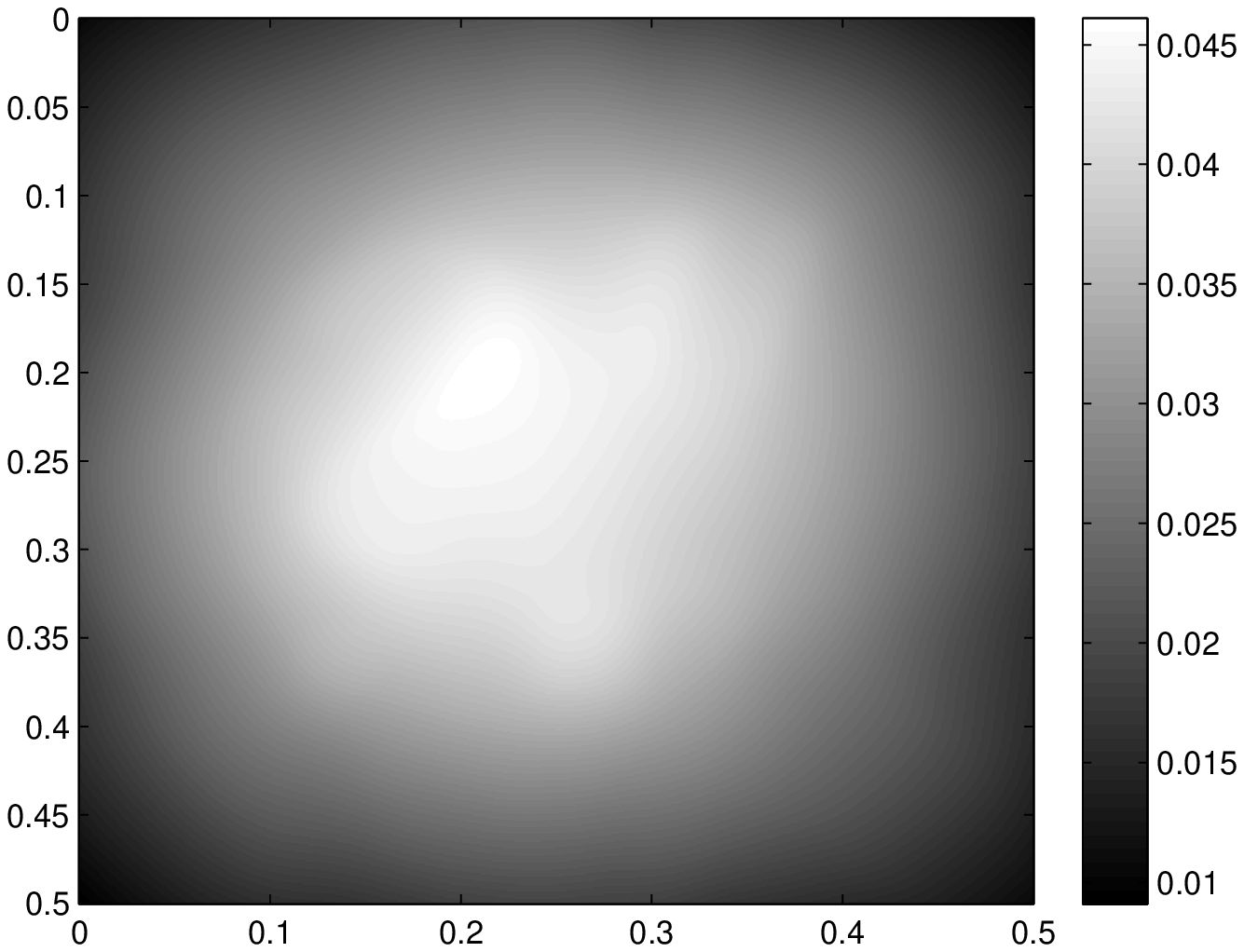} 
\includegraphics[height=5.1cm,angle=0]{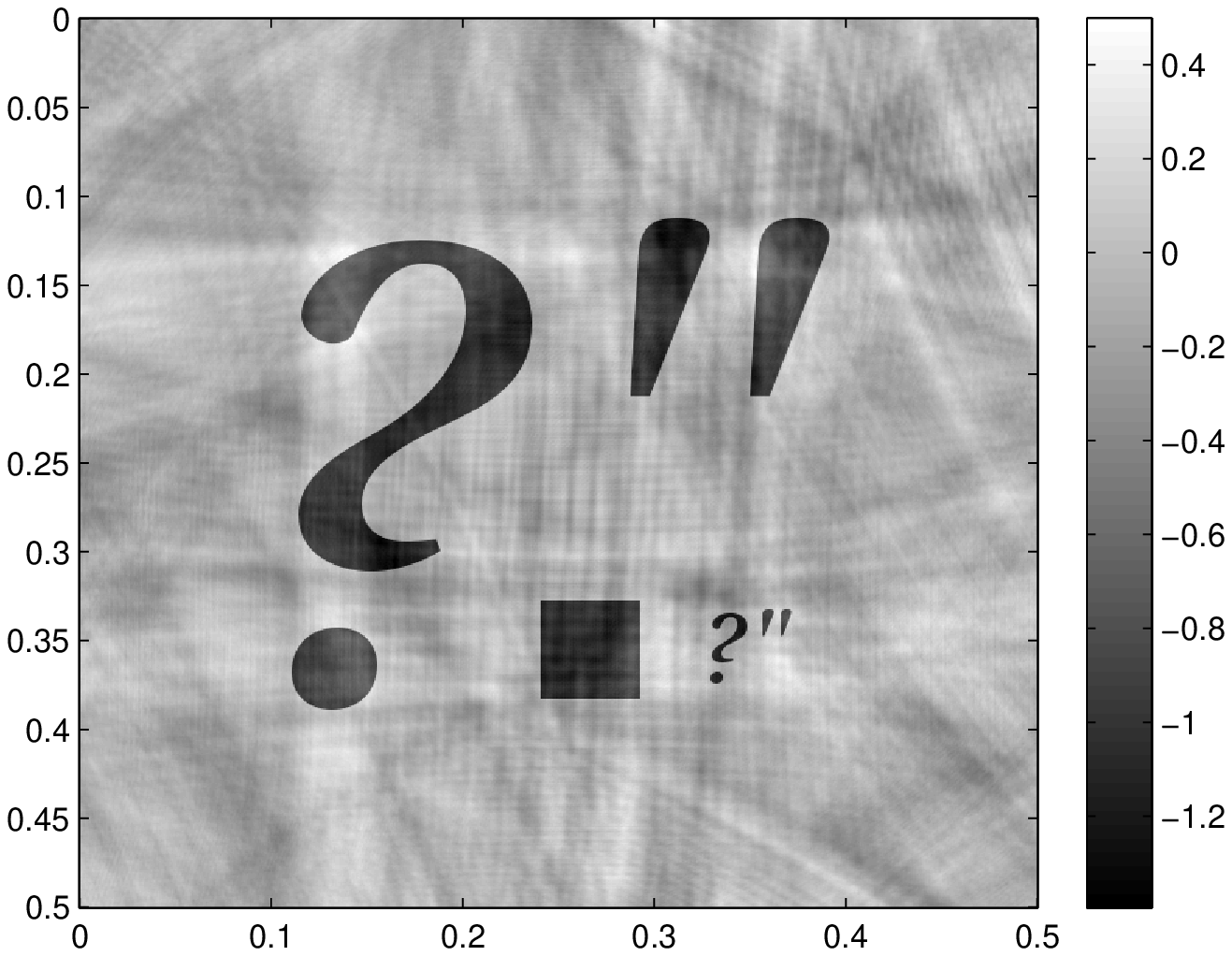}
\end{center}
\caption{The first row shows the initial pressure functions $\varphi$ and the respective time reversal image $\mathcal{I}$ 
for $\tau_1=10^{-9}\,s$.  
The function $\mathcal{I}_0$ which is up to a constant the solution of wave equation~(\ref{waveeqw}) and its Laplacian 
$\Delta\,\mathcal{I}_0$ are visualized in the last row.}
\label{fig:sim}
\end{figure}

The chosen set-up is visualized in Fig.~\ref{fig:setup}. 
For our simulations we have dropped the regularization operator $\mathcal{R}_D$ and $\J_t\,\varphi=w(\cdot,t)$ was calculated by 
solving the wave equation 
\begin{equation}\label{waveeqw}
     \Delta\,w 
        + \frac{\tau_1^2\,c_0^2}{4}\,\Delta^4\,w 
        - \frac{1}{c_1^2}\,\frac{\partial^2 w}{\partial t^2} 
          = - \frac{2\,\varphi}{c_1^2}  \qquad\mbox{on $\R^3\times [0,T]$} \qquad (c_1:=2\,c_0)\,.
\end{equation}
via the forward Euler method on $[0,4\,L]^2\times [0,T]$ with $L=0.5\,m$, $T=\frac{4\,L}{c_0}$, 
$$
    \Delta t = \frac{1}{2}\,\frac{\Delta x}{c_1}  \qquad\mbox{and}\qquad 
    \Delta x = \Delta y =\frac{L_0}{1020} \,.
$$
We note that the CFL-condition is satisfies for $\tau_1=0$.  

\begin{exam}
The first example is visualized in Fig.~\ref{fig:setup}. The circular peaks in $\varphi$ are  $C^\infty-$functions of the form 
$$ 
         \x \mapsto f(a-|\x-\mathbf{b}|^2) \qquad\mbox{with}\qquad  f(s) := e^{-1/s},\,\,\mathbf{b}\in\Omega  \qquad\mbox{and}\qquad
$$
$a\in \{3.6,\,14.4,\,32.4,\,57.6\}\,mm$. 
The right picture in the first row in Fig.~\ref{fig:setup} shows the initial pressure function $\varphi$ and the 
second row shows the time reversal images for $\tau_1=10^{-9}\,s$ (water) and $\tau_1= 8.5\cdot 10^{-7}\,s$ 
(fictive). We see that the smallest features is not well mapped for strong dissipation, it is twice as thick and its maximum 
intensity is about 30 percent of the correct maximum intensity. 
Numerical simulations show that the time reversal image blows up for $\tau_1= 10^{-6}\,s$, even if the time step size 
is decreased by the factor $1/8$. 
\end{exam}

\begin{exam}
A second numerical example for a piecewise constant function $\varphi$ is presented in Fig.~\ref{fig:sim}. The last row 
shows $\mathcal{I}_0$ which is up to a constant the solution of wave equation~(\ref{waveeqw}) and its Laplacian 
$\Delta\,\mathcal{I}_0$. We note that non-smooth initial pressure functions and stronger dissipation causes more artifacts 
in the solution of $\Delta\,\mathcal{I}_0$. As in the first example the time reversal image blows up for $\tau_1= 10^{-6}\,s$. 
\end{exam}

\section{Conclusions}

In this paper we showed that a method for solving PAT based on the non-causal thermo-viscous wave 
equation can be used if 
\begin{itemize}
\item the non-causal data are replaced by appropriately time shifted causal data and 
      
\item the size of $\supp(\varphi)$ is much smaller than its distance to the detectors.
\end{itemize}
In other words, performing an appropriate time shift of causal data permits the use of the non-causal 
thermo-viscous wave equation. 
Moreover, we showed that 
\begin{itemize}
\item strictly speaking the time reversal image exists only if the data are regularized, e.g. by the operator $\mathcal{R}_D$  
      for restriction $D>\tau_1\,c_0^2\,T$ (cf.~(\ref{defRd})) and that 
      
\item the regularized time reversal image for the case of dissipative media like water is very similar 
to a smoothed version of the initial pressure function $\varphi$. 
\end{itemize} 
If required, the time reversal image can be improved by solving operator equation~(\ref{opeqA}) with 
the time reversal image as right hand side. 

Above all, we would like to emphasize that this paper has analyzed the quality of an idealized estimation 
(noise-free case), but did not discussed the quality of a reconstruction using real noisy data.

\end{document}